\newtheorem{theorem}{Theorem}
\newtheorem{corollary}[theorem]{Corollary}
\newtheorem{remark}[theorem]{Remark}
\newenvironment{proof}[1][Proof]{\noindent\textbf{#1.} }{\ \rule{0.5em}{0.5em}}
\begin{document}

\title{Finite and infinite hypergeometric sums involving the digamma function}

\author{Juan L. Gonz\'{a}lez-Santander}

\address{Department of Mathematics, Universidad de Oviedo, 33007 Oviedo, Spain.}
\ead{gonzalezmarjuan@uniovi.es}
\vspace{10pt}
\begin{indented}
\item[]July 2022
\end{indented}

\begin{abstract}
We calculate some finite and infinite sums containing the digamma function
in closed-form. For this purpose, we differentiate selected reduction
formulas of the hypergeometric function with respect to the parameters
applying some derivative formulas of the Pochhammer symbol. Also, we compare
two different differentiation formulas of the generalized hypergeometric
function with respect to the parameters. For some particular cases, we
recover some results found in the literature. Finally, all the results have
been numerically checked.
\end{abstract}

\ams{33B15, 33C05}
%
\vspace{2pc}
\noindent{\it Keywords}: Digamma function, Differentiation with respect to parameters, Closed-form sums calculation.
%
%
%
%

\section{Introduction}

A large number of finite sums and series involving the digamma function have
been compiled by Hansen \cite{Hansen} and more recently by Brychov \cite{Brychov}.
Some authors have contributed to enhance this compilation, such
as Miller \cite{Miller}, who used reduction formulas of the Kamp\'{e} de F%
\'{e}riet function; and Cvijovi\'{c} \cite{Cvijovic}, who used the
derivative of the Pochhammer symbol. Sums involving the digamma function
occur in the expressions of the derivatives of the Mittag-Leffler function
and the Wright function with respect to parameters \cite{Apelblat1,Apelblat2}%
. Also, they occur in the derivation of asymptotic expansions for
Mellin-Barnes integrals \cite{Paris}.

The aim of this paper is the derivation of several apparently new results by
using also the derivative of the Pochhammer symbol to known reduction
formulas of the hypergeometric function. Nevertheless, for the last result
given in this paper, we use other approach. For this purpose, we compare the
expression of the first derivative of the generalized hypergeometric
function with respect to the parameters given in \cite{Fejzullahu} to the
one given in \cite{Sofostasios}. As a consitency test, for many particular
values of the results obtained, we recover expressions given in the
literature. In adittion, we have checked all the derived expressions with
the aid of MATHEMATICA since sometimes we found some erratums in the
literature.

This paper is organized as follows. In Section \ref{Section: Preliminaires},
we present some basic properties of the Pochhammer symbol, the beta and the
digamma functions. In adittion, we set the notation we use throughout the
paper. In Sections \ref{Section: Finite sums} and \ref{Section: Infinte sums}%
, we derive some results for finite and infinite sums respectively involving
the digamma function. Finally, we collect our conclusions in Section \ref%
{Section: Conclusions}.

\section{Preliminaries}\label{Section: Preliminaires}

The Pochhamer symbol is defined as \cite[Eqn. 18:12:1]{Atlas}%
\begin{equation}
\left( x\right) _{n}=\frac{\Gamma \left( x+n\right) }{\Gamma \left( x\right)
},  \label{Pochhammer_def}
\end{equation}%
where $\Gamma \left( x\right) $ denotes the gamma function. Also, the beta
function, defined as \cite[Eqn. 1.5.3]{Lebedev}%
\begin{eqnarray*}
\mathrm{B}\left( x,y\right) &=&\int_{0}^{1}t^{x-1}\left( 1-t\right) ^{y-1}dt,
\\
&&\mathrm{Re}\,x >0,\ \mathrm{Re}\,y>0,
\end{eqnarray*}%
satisfies the property \cite[Eqn. 1.5.5]{Lebedev}
\begin{equation}
\mathrm{B}\left( x,y\right) =\frac{\Gamma \left( x\right) \Gamma \left(
y\right) }{\Gamma \left( x+y\right) }.  \label{Beta_property}
\end{equation}

For $0\leq z\leq 1$, the incomplete beta function is defined as \cite[Eqn.
58:3:1]{Atlas}%
\begin{equation*}
\mathrm{B}_{z}\left( x,y\right) =\int_{0}^{z}t^{x-1}\left( 1-t\right)
^{y-1}dt.
\end{equation*}

Next, we state some properties of the Pochhammer symbol, i.e.\ the
reflection formula \cite[Eqn. 18:5:1]{Atlas},%
\begin{equation}
\left( -x\right) _{n}=\left( -1\right) ^{n}\left( x-n+1\right) _{n},
\label{Reflection_Pochhammer}
\end{equation}%
the properties \cite[Eqn. 18:5:7\&2:12:3]{Atlas},
\begin{eqnarray}
\left( x\right) _{n+1} &=&x\left( x+1\right) _{n},
\label{property_Pochhammer} \\
\left( \frac{1}{2}\right) _{n} &=&\frac{\left( 2n\right) !}{4^{n}n!},
\label{(1/2)_n}
\end{eqnarray}%
and the differentiation of the Pochhammer symbol \cite[Eqn. 18:10:1]{Atlas}
\begin{equation}
\frac{d}{dx}\left( x\right) _{n}=\left( x\right) _{n}\left[ \psi \left(
x+n\right) -\psi \left( x\right) \right] ,  \label{D[(x)_n]}
\end{equation}%
thus%
\begin{equation}
\frac{d}{dx}\left[ \frac{1}{\left( x\right) _{n}}\right] =\frac{1}{\left(
x\right) _{n}}\left[ \psi \left( x\right) -\psi \left( x+n\right) \right] ,
\label{D[1/(x)_n]}
\end{equation}%
where $\psi \left( x\right) $ denotes the digamma function \cite[Ch. 44]%
{Atlas}%
\begin{equation*}
\psi \left( x\right) =\frac{\Gamma ^{\prime }\left( x\right) }{\Gamma \left(
x\right) },
\end{equation*}%
with the following properties \cite[Eqns. 1.3.3-9]{Lebedev}
\begin{eqnarray}
\psi \left( z+1\right) &=&\frac{1}{z}+\psi \left( z\right) ,
\label{psi(1+z)} \\
\psi \left( 1-z\right) -\psi \left( z\right) &=&\pi \cot \left( \pi z\right)
,  \label{psi(1-z)-psi(z)} \\
\psi \left( z\right) +\psi \left( z+\frac{1}{2}\right) +2\log 2 &=&2\psi
\left( 2z\right) ,  \label{Duplication_psi} \\
\psi \left( 1\right) &=&-\gamma ,  \label{psi(1)} \\
\psi \left( \frac{1}{2}\right) &=&-\gamma -\log 4,  \label{psi(1/2)} \\
\psi \left( n+1\right) &=&-\gamma +H_{n},  \label{psi(1+n)} \\
\psi \left( n+\frac{1}{2}\right) &=&-\gamma -\log 4+2H_{2n}-H_{n},
\label{psi(n+1/2)}
\end{eqnarray}%
where
\begin{equation*}
H_{n}=\sum_{k=1}^{n}\frac{1}{k}
\end{equation*}%
is the $n$-th harmonic number.

Throughout the paper, we adopt the notation \cite[p. 797]{Prudnikov3}%
\begin{equation}
\beta \left( z\right) =\frac{1}{2}\left[ \psi \left( \frac{z+1}{2}\right)
-\psi \left( \frac{z}{2}\right) \right] .  \label{beta_def}
\end{equation}

Also, $_{p}F_{q}\left( z\right) $ denotes the generalized hypergeometric
function, usually defined by means of the hypergeometric series \cite[Sect.
16.2]{NIST}:%
\begin{equation*}
_{p}F_{q}\left( \left.
\begin{array}{c}
\left( a_{p}\right) \\
\left( b_{q}\right)%
\end{array}%
\right\vert z\right) =\,_{p}F_{q}\left( \left.
\begin{array}{c}
a_{1},\ldots ,a_{p} \\
b_{1},\ldots b_{q}%
\end{array}%
\right\vert z\right) =\sum_{k=0}^{\infty }\frac{\left( a_{1}\right)
_{k}\cdots \left( a_{p}\right) _{k}}{\left( b_{1}\right) _{k}\cdots \left(
b_{q}\right) _{k}}\frac{z^{k}}{k!},
\end{equation*}%
whenever this series converge and elsewhere by analytic continuation.
Finally, we use the notation:%
\begin{equation*}
\left( \left( a_{p}\right) \right) _{k}=\left( a_{1}\right) _{k}\cdots
\left( a_{p}\right) _{k}.
\end{equation*}

\section{Finite sums involving digamma function}\label{Section: Finite sums}

\begin{theorem}
The following summation formula holds true:%
\begin{eqnarray}
&&\sum_{k=0}^{n}\left( -1\right) ^{k}{n \choose k}\frac{\left( a\right) _{k}}{%
\left( c\right) _{k}}\psi \left( a+k\right)  \label{Chu_Vandermonde_psi} \\
&=&\frac{\left( c-a\right) _{n}}{\left( c\right) _{n}}\left[ \psi \left(
a\right) -\psi \left( c-a+n\right) +\psi \left( c-a\right) \right] .  \nonumber
\end{eqnarray}
\end{theorem}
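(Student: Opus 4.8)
The plan is to recognize the left-hand side as the result of differentiating the terminating Chu--Vandermonde summation formula with respect to the parameter $a$. Indeed, since $(-n)_k/k! = (-1)^k\binom{n}{k}$ (a consequence of \eqref{Reflection_Pochhammer}), the Chu--Vandermonde identity can be written as
\[
\sum_{k=0}^{n}(-1)^{k}{n \choose k}\frac{(a)_{k}}{(c)_{k}}=\,_{2}F_{1}\!\left(\left.\begin{array}{c}-n,a\\c\end{array}\right\vert 1\right)=\frac{(c-a)_{n}}{(c)_{n}}.
\]
For fixed $n$ and $c$ avoiding the poles of $1/(c)_n$, both sides are rational functions of $a$, so the identity may be differentiated in $a$; the general case then follows by analytic continuation in $c$.

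First I would differentiate term by term on the left (a finite sum, so this is immediate) using the Pochhammer derivative \eqref{D[(x)_n]}, which gives
\[
\sum_{k=0}^{n}(-1)^{k}{n \choose k}\frac{(a)_{k}}{(c)_{k}}\bigl[\psi(a+k)-\psi(a)\bigr]=\frac{d}{da}\left[\frac{(c-a)_{n}}{(c)_{n}}\right].
\]
Splitting the bracket and applying Chu--Vandermonde once more to the part proportional to $\psi(a)$ isolates the target sum as $\psi(a)\,(c-a)_n/(c)_n$ plus the derivative of the right-hand side. The remaining task is to evaluate $\frac{d}{da}\bigl[(c-a)_{n}/(c)_{n}\bigr]$: treating $(c-a)_n$ as a composition, the chain rule together with \eqref{D[(x)_n]} and the inner derivative $d(c-a)/da=-1$ yields $\frac{d}{da}(c-a)_{n}=-(c-a)_{n}\bigl[\psi(c-a+n)-\psi(c-a)\bigr]$.

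Substituting this back and collecting terms produces exactly
\[
\sum_{k=0}^{n}(-1)^{k}{n \choose k}\frac{(a)_{k}}{(c)_{k}}\psi(a+k)=\frac{(c-a)_{n}}{(c)_{n}}\bigl[\psi(a)-\psi(c-a+n)+\psi(c-a)\bigr],
\]
which is \eqref{Chu_Vandermonde_psi}. I do not anticipate a genuine obstacle here: the computation is essentially mechanical once the starting reduction formula is chosen correctly. The only points requiring attention are the sign coming from the chain rule in the last step and the bookkeeping that separates the $\psi(a)$ contribution from the differentiated sum; the convergence/differentiability issues are trivial because the sum terminates.
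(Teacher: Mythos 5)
Your proposal is correct and follows essentially the same route as the paper: rewrite the terminating Chu--Vandermonde identity via the reflection formula for $(-n)_k$, differentiate both sides with respect to $a$ using \eqref{D[(x)_n]} (with the chain-rule sign for $(c-a)_n$), and reuse Chu--Vandermonde to isolate the $\psi(a)$ term. No gaps.
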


\begin{proof}
Chu-Vandermonde summation formula \cite[Corollary 2.2.3]{Andrews} is given by%
\begin{equation}
_{2}F_{1}\left( \left.
\begin{array}{c}
-n,a \\
c%
\end{array}%
\right\vert 1\right) =\frac{\left( c-a\right) _{n}}{\left( c\right) _{n}}%
,\quad n\in
\mathbb{N}
.  \label{Chu_vandermonde}
\end{equation}%
According to (\ref{Reflection_Pochhammer}) and (\ref{Pochhammer_def}), we
have%
\begin{eqnarray*}
_{2}F_{1}\left( \left.
\begin{array}{c}
-n,a \\
c%
\end{array}%
\right\vert 1\right) &=&\sum_{k=0}^{\infty }\frac{\left( -n\right)
_{k}\left( a\right) _{k}}{k!\left( c\right) _{k}} \\
&=&\sum_{k=0}^{\infty }\frac{\left( -1\right) ^{k}\left( n-k+1\right)
_{k}\left( a\right) _{k}}{k!\left( c\right) _{k}} \\
&=&\sum_{k=0}^{\infty }\frac{\left( -1\right) ^{k}\,\Gamma \left( n+1\right)
\left( a\right) _{k}}{k!\,\Gamma \left( n-k+1\right) \left( c\right) _{k}} \\
&=&\sum_{k=0}^{n}{{n \choose k}}\frac{\left( -1\right) ^{k}\left( a\right) _{k}%
}{\left( c\right) _{k}}.
\end{eqnarray*}
Apply (\ref{D[(x)_n]}) to differentiate (\ref{Chu_vandermonde})\ with
respect to the parameter $a$. On the one hand, we have%
\begin{equation}
\frac{\partial }{\partial a}\left[ \frac{\left( c-a\right) _{n}}{\left(
c\right) _{n}}\right] =-\frac{\left( c-a\right) _{n}}{\left( c\right) _{n}}%
\left[ \psi \left( c-a+n\right) -\psi \left( c-a\right) \right] ,
\label{Chu_1}
\end{equation}%
and, on the other hand,%
\begin{eqnarray}
&&\frac{\partial }{\partial a}\left[ _{2}F_{1}\left( \left.
\begin{array}{c}
-n,a \\
c%
\end{array}%
\right\vert 1\right) \right]  \nonumber \\
&=&\sum_{k=0}^{n}{n \choose k}\frac{\left( -1\right) ^{k}}{\left( c\right)
_{k}}\frac{d\left( a\right) _{k}}{da}  \nonumber \\
&=&\sum_{k=0}^{n}\left( -1\right) ^{k} {n \choose k}\frac{\left( a\right) _{k}%
}{\left( c\right) _{k}}\psi \left( a+k\right) -\psi \left( a\right)
\sum_{k=0}^{n} {n \choose k}\frac{\left( -1\right) ^{k}\left( a\right) _{k}}{%
\left( c\right) _{k}}  \nonumber \\
&=&\sum_{k=0}^{n}\left( -1\right) ^{k} {n \choose k}\frac{\left( a\right) _{k}%
}{\left( c\right) _{k}}\psi \left( a+k\right) -\psi \left( a\right) \frac{%
\left( c-a\right) _{n}}{\left( c\right) _{n}}.  \label{Chu_2}
\end{eqnarray}%
Equating (\ref{Chu_1})\ to (\ref{Chu_2}), we obtain (\ref%
{Chu_Vandermonde_psi}), as we wanted to prove.
\end{proof}
\begin{corollary}
For $a=1$, taking into account (\ref{psi(1)}),\ we get%
\begin{eqnarray*}
&&\sum_{k=0}^{n}\frac{\left( -1\right) ^{k}\psi \left( k+1\right) }{\left(
n-k\right) !\left( c\right) _{k}} \\
&=&\frac{\left( c-1\right) _{n}}{n!\left( c\right) _{n}}\left[ -\gamma -\psi
\left( c-1+n\right) +\psi \left( c-1\right) \right] .
\end{eqnarray*}
\end{corollary}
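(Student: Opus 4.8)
The plan is to specialize the main theorem's summation formula \eqref{Chu_Vandermonde_psi} to the case $a=1$ and then simplify each piece using the elementary identities collected in the Preliminaries. First I would substitute $a=1$ into the left-hand side: the binomial coefficient becomes ${n\choose k}=\frac{n!}{k!\,(n-k)!}$, so $(-1)^k{n\choose k}\frac{(1)_k}{(c)_k}\psi(1+k)$ needs the reduction $(1)_k=k!$. This cancels the $k!$ in the denominator of the binomial coefficient, leaving $\frac{(-1)^k}{(n-k)!\,(c)_k}\psi(k+1)$ after dividing through by $n!$ — which is exactly the summand on the left of the corollary. So the left-hand side of \eqref{Chu_Vandermonde_psi}, divided by $n!$, is the claimed sum.

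Next I would handle the right-hand side. With $a=1$, the prefactor $\frac{(c-a)_n}{(c)_n}$ becomes $\frac{(c-1)_n}{(c)_n}$, and dividing by $n!$ gives $\frac{(c-1)_n}{n!\,(c)_n}$, matching the stated prefactor. The bracketed factor $\psi(a)-\psi(c-a+n)+\psi(c-a)$ becomes $\psi(1)-\psi(c-1+n)+\psi(c-1)$, and invoking \eqref{psi(1)}, namely $\psi(1)=-\gamma$, turns this into $-\gamma-\psi(c-1+n)+\psi(c-1)$, which is precisely the bracket in the corollary. Assembling the two sides and dividing the whole identity \eqref{Chu_Vandermonde_psi} by $n!$ yields the corollary.

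There is essentially no obstacle here — this is a direct substitution plus two bookkeeping simplifications, $(1)_k=k!$ and $\psi(1)=-\gamma$. The only point requiring the slightest care is tracking the overall factor of $n!$ consistently on both sides (it comes from writing ${n\choose k}/k! = \frac{1}{k!\,(n-k)!}\cdot\frac{n!}{1}$, equivalently from the identity $(1)_k/k!=1$ combined with the binomial expansion of the $n!$). I would simply state the substitution, note the two identities used, and conclude.
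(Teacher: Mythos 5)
Your proposal is correct and coincides with the paper's (implicit) argument: the corollary is obtained exactly by setting $a=1$ in \eqref{Chu_Vandermonde_psi}, using $(1)_k=k!$ to cancel against the binomial coefficient, dividing by $n!$, and invoking $\psi(1)=-\gamma$ from \eqref{psi(1)}. No further comment is needed.
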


\begin{theorem}
Similarly to (\ref{Chu_Vandermonde_psi}), if we perform the derivative with
respect to the $c$ parameter and apply (\ref{D[1/(x)_n]}), we will obtain%
\begin{eqnarray*}
&&\sum_{k=0}^{n}\left( -1\right) ^{k}{n \choose k}\frac{\left( a\right) _{k}}{%
\left( c\right) _{k}}\psi \left( c+k\right) \\
&=&\frac{\left( c-a\right) _{n}}{\left( c\right) _{n}}\left[ \psi \left(
c+n\right) +\psi \left( c-a\right) -\psi \left( c-a+n\right) \right] .
\end{eqnarray*}
\end{theorem}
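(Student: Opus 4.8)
The plan is to imitate verbatim the proof of (\ref{Chu_Vandermonde_psi}), but now differentiating the Chu--Vandermonde formula (\ref{Chu_vandermonde}) with respect to the parameter $c$ rather than $a$. As in that proof, I would first rewrite the left-hand side of (\ref{Chu_vandermonde}) as the terminating sum $\sum_{k=0}^{n}{n\choose k}(-1)^{k}(a)_{k}/(c)_{k}$ by means of (\ref{Reflection_Pochhammer}) and (\ref{Pochhammer_def}), so that the identity to be differentiated reads $\sum_{k=0}^{n}{n\choose k}(-1)^{k}(a)_{k}/(c)_{k}=(c-a)_{n}/(c)_{n}$.

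Next I would compute $\partial/\partial c$ of the right-hand side. Since $c$ now appears in both the numerator $(c-a)_{n}$ and the denominator $(c)_{n}$, I apply the product rule together with (\ref{D[(x)_n]}) and (\ref{D[1/(x)_n]}) to get
\[
\frac{\partial }{\partial c}\left[ \frac{(c-a)_{n}}{(c)_{n}}\right] =\frac{(c-a)_{n}}{(c)_{n}}\left[ \psi (c-a+n)-\psi (c-a)+\psi (c)-\psi (c+n)\right] .
\]
On the other side, differentiating the terminating series term by term and using (\ref{D[1/(x)_n]}) on each factor $1/(c)_{k}$ gives
\[
\frac{\partial }{\partial c}\sum_{k=0}^{n}{n\choose k}\frac{(-1)^{k}(a)_{k}}{(c)_{k}}=\psi (c)\,\frac{(c-a)_{n}}{(c)_{n}}-\sum_{k=0}^{n}{n\choose k}\frac{(-1)^{k}(a)_{k}}{(c)_{k}}\psi (c+k),
\]
where the $\psi (c)$-term has been pulled out of the sum and the remaining factor recombined into $(c-a)_{n}/(c)_{n}$ by (\ref{Chu_vandermonde}) itself. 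Equating the two expressions, the common term $\psi (c)(c-a)_{n}/(c)_{n}$ cancels, and solving for the remaining sum yields the stated formula.

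The computation is entirely routine, and essentially the only point needing attention is that, unlike in (\ref{Chu_Vandermonde_psi}) where only the numerator of the closed form depended on the differentiation variable, here both $(c-a)_{n}$ and $(c)_{n}$ do, so both (\ref{D[(x)_n]}) and (\ref{D[1/(x)_n]}) must be invoked and the signs tracked carefully; a convenient consistency check is to set $n\to\infty$ or to specialize small values of $n$ and compare with the first theorem.
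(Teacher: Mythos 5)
Your proposal is correct and is exactly the argument the paper intends: the paper itself only sketches this theorem by saying ``similarly'' to the proof of (\ref{Chu_Vandermonde_psi}), differentiating the Chu--Vandermonde identity (\ref{Chu_vandermonde}) with respect to $c$ and using (\ref{D[(x)_n]}) and (\ref{D[1/(x)_n]}), which is precisely what you carry out, with the signs and the cancellation of the $\psi(c)\,(c-a)_{n}/(c)_{n}$ term handled correctly.
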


\begin{corollary}
For $a=1$, we get%
\begin{eqnarray}
&&\sum_{k=0}^{n}\frac{\left( -1\right) ^{k}\psi \left( c+k\right) }{\left(
n-k\right) !\left( c\right) _{k}}  \label{Sum_corollary} \\
&=&\frac{c-1}{n!\left( c-1+n\right) }\left[ \frac{1}{c-1+n}+\psi \left(
c-1\right) \right] .  \nonumber
\end{eqnarray}
\end{corollary}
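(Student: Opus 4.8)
The plan is to specialize the preceding theorem at $a=1$ and then reduce the resulting closed form using the elementary identities for the Pochhammer symbol and the digamma function collected in Section~\ref{Section: Preliminaires}.

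First I would set $a=1$ in
\[
\sum_{k=0}^{n}\left( -1\right) ^{k}{n \choose k}\frac{\left( a\right) _{k}}{\left( c\right) _{k}}\psi \left( c+k\right)
=\frac{\left( c-a\right) _{n}}{\left( c\right) _{n}}\left[ \psi \left(c+n\right) +\psi \left( c-a\right) -\psi \left( c-a+n\right) \right].
\]
Since $\left(1\right)_{k}=k!$, on the left-hand side we have ${n\choose k}\frac{(1)_k}{(c)_k}=\frac{n!}{(n-k)!\,(c)_k}$, so dividing through by $n!$ turns the sum into exactly the left-hand side of (\ref{Sum_corollary}). It then remains to check that the right-hand side, after the same division by $n!$, collapses to $\frac{c-1}{n!\left( c-1+n\right) }\left[ \frac{1}{c-1+n}+\psi \left(c-1\right) \right]$.

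For the prefactor I would use the definition (\ref{Pochhammer_def}) to write $\frac{(c-1)_n}{(c)_n}=\frac{\Gamma(c-1+n)/\Gamma(c-1)}{\Gamma(c+n)/\Gamma(c)}=\frac{\Gamma(c)}{\Gamma(c-1)}\cdot\frac{\Gamma(c-1+n)}{\Gamma(c+n)}=\frac{c-1}{c-1+n}$; equivalently, one application of (\ref{property_Pochhammer}) in both numerator and denominator telescopes the quotient. For the bracket, taking $z=c-1+n$ in the recurrence (\ref{psi(1+z)}) gives $\psi(c+n)-\psi(c-1+n)=\frac{1}{c-1+n}$, hence with $a=1$ the expression $\psi(c+n)+\psi(c-a)-\psi(c-a+n)$ becomes $\frac{1}{c-1+n}+\psi(c-1)$. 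Multiplying the two simplified factors and dividing by $n!$ produces (\ref{Sum_corollary}).

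I do not expect a genuine obstacle: the statement is a direct specialization, and the only places that need care are the bookkeeping of the factor $n!$ when passing from ${n\choose k}(1)_k/(c)_k$ to $n!/((n-k)!\,(c)_k)$, and the tacit assumption that $c$ avoids the non-positive integers (with the $c=1$ case, where $(c-1)_n$ vanishes for $n\ge 1$, recovered by continuity) so that every Pochhammer symbol and every digamma value appearing is well defined.
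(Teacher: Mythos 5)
Your proposal is correct and matches the paper's (implicit) derivation: the corollary is simply the $a=1$ specialization of the preceding theorem, with $(1)_k=k!$ collapsing ${n\choose k}(1)_k/(c)_k$ to $n!/((n-k)!\,(c)_k)$, the telescoping $\left(c-1\right)_n/\left(c\right)_n=(c-1)/(c-1+n)$, and the recurrence (\ref{psi(1+z)}) giving $\psi(c+n)-\psi(c-1+n)=1/(c-1+n)$. Your remarks about the $n!$ bookkeeping and the exceptional values of $c$ are sound but not points the paper dwells on.
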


\begin{corollary}
Taking the limit $c\rightarrow 1$ in (\ref{Sum_corollary}), and applying (%
\ref{psi(1+z)}), we obtain
\begin{equation*}
\sum_{k=0}^{n}\left( -1\right) ^{k+1} {n \choose k}\psi \left( k+1\right) =%
\frac{1}{n}.
\end{equation*}
\end{corollary}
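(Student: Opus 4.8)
The plan is to obtain the stated identity simply by letting $c\rightarrow 1$ in (\ref{Sum_corollary}), the only delicate point being that the right-hand side there presents an indeterminate form of type $0\cdot\infty$, since $\psi$ has a simple pole at the origin and $\psi(c-1)$ blows up as $c\rightarrow 1$. First I would dispose of the left-hand side, which is harmless: as $c\rightarrow 1$ we have $(c)_k\rightarrow(1)_k=k!$ and $\psi(c+k)\rightarrow\psi(k+1)$, so
\begin{equation*}
\lim_{c\rightarrow 1}\sum_{k=0}^{n}\frac{\left( -1\right) ^{k}\psi \left( c+k\right) }{\left( n-k\right) !\left( c\right) _{k}}=\sum_{k=0}^{n}\frac{\left( -1\right) ^{k}\psi \left( k+1\right) }{\left( n-k\right) !\,k!}=\frac{1}{n!}\sum_{k=0}^{n}\left( -1\right) ^{k}{n \choose k}\psi \left( k+1\right) ,
\end{equation*}
which, up to the sign and the factor $1/n!$, is exactly the sum we want.

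The real work is the right-hand side. I would set $\varepsilon=c-1$ and use the recurrence (\ref{psi(1+z)}), in the form $\psi(\varepsilon)=\psi(1+\varepsilon)-1/\varepsilon$, to peel off the singular part:
\begin{equation*}
\frac{\varepsilon }{n!\left( \varepsilon +n\right) }\left[ \frac{1}{\varepsilon +n}+\psi \left( \varepsilon \right) \right] =\frac{\varepsilon }{n!\left( \varepsilon +n\right) }\left[ \frac{1}{\varepsilon +n}+\psi \left( 1+\varepsilon \right) \right] -\frac{1}{n!\left( \varepsilon +n\right) }.
\end{equation*}
In the first term on the right the bracket stays bounded as $\varepsilon\rightarrow 0$ while the prefactor tends to $0$, so that term vanishes in the limit; the second term tends to $-1/(n\cdot n!)$. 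Hence the right-hand side of (\ref{Sum_corollary}) tends to $-1/(n\cdot n!)$.

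Equating the two limits gives $\frac{1}{n!}\sum_{k=0}^{n}(-1)^{k}{n \choose k}\psi(k+1)=-\frac{1}{n\cdot n!}$, and multiplying through by $-n!$ produces the asserted formula (implicitly $n\geq 1$, so that the expressions are well defined). The only genuine obstacle is the careful bookkeeping of the $0\cdot\infty$ indeterminacy on the right-hand side; once (\ref{psi(1+z)}) is invoked to extract the $-1/\varepsilon$ term, the remaining limits are elementary.
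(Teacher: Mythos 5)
Your proposal is correct and follows exactly the route the paper intends: take $c\to 1$ in (\ref{Sum_corollary}), use (\ref{psi(1+z)}) in the form $\psi(c-1)=\psi(c)-1/(c-1)$ to cancel the pole against the prefactor $c-1$, and equate the resulting limit $-1/(n\,n!)$ with the limit $\frac{1}{n!}\sum_{k=0}^{n}(-1)^{k}\binom{n}{k}\psi(k+1)$ of the left-hand side. The bookkeeping of the $0\cdot\infty$ indeterminacy is exactly the step the paper leaves implicit, and you have carried it out correctly (with the implicit restriction $n\geq 1$).
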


\begin{theorem}
For $n\in
\mathbb{N}
$, the following finite sum holds true:%
\begin{eqnarray}
&&\sum_{k=0}^{n}\frac{2^{k}\left( a\right) _{k}\left( 2n-k-1\right) !}{%
k!\left( n-k\right) !}\psi \left( a+k\right)  \label{Sum_Qureshi} \\
&=&\frac{2^{2\left( n-1\right) }}{n}\left\{ \left( \frac{1+a}{2}\right) _{n}%
\left[ \psi \left( \frac{1+a}{2}+n\right) +\psi \left( \frac{a}{2}\right)
+\log 4\right] \right.  \nonumber \\
&&+\left. \left( \frac{a}{2}\right) _{n}\left[ \psi \left( \frac{a}{2}%
+n\right) +\psi \left( \frac{1+a}{2}\right) +\log 4\right] \right\} .  \nonumber
\end{eqnarray}
\end{theorem}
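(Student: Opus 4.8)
The plan is to follow the same route as in the previous theorems: take a known reduction formula of the Gauss hypergeometric function, differentiate it with respect to an upper parameter, and use (\ref{D[(x)_n]}). The reduction formula I would start from is the evaluation of ${}_{2}F_{1}$ at the point $2$,
\begin{equation*}
{}_{2}F_{1}\left( \left.
\begin{array}{c}
-n,a \\
1-2n
\end{array}
\right\vert 2\right) =\frac{\left( \frac{a}{2}\right) _{n}+\left( \frac{a+1}{2}\right) _{n}}{\left( \frac{1}{2}\right) _{n}},\qquad n\in \mathbb{N}.
\end{equation*}
Expressing $\left( -n\right) _{k}$ and $\left( 1-2n\right) _{k}$ by means of the reflection formula (\ref{Reflection_Pochhammer}) (the series terminates at $k=n$, well before the non-positive denominator parameter $1-2n$ can cause trouble) and simplifying with (\ref{(1/2)_n}), this is the same as
\begin{equation*}
\Phi _{n}\left( a\right) :=\sum_{k=0}^{n}\frac{2^{k}\left( a\right) _{k}\left( 2n-k-1\right) !}{k!\left( n-k\right) !}=\frac{2^{2n-1}}{n}\left[ \left( \frac{a+1}{2}\right) _{n}+\left( \frac{a}{2}\right) _{n}\right] ,
\end{equation*}
whose left-hand side is exactly the $\psi$-free part of (\ref{Sum_Qureshi}). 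I would take this identity as given, citing it from the tables of argument-$2$ reductions of ${}_{2}F_{1}$ (or establishing it on its own).

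Since $\Phi _{n}$ is a finite sum of polynomials in $a$, I can differentiate term by term. By (\ref{D[(x)_n]}), $\frac{d}{da}\left( a\right) _{k}=\left( a\right) _{k}\left[ \psi \left( a+k\right) -\psi \left( a\right) \right] $, so that $\Phi _{n}^{\prime }\left( a\right) $ equals the sum in (\ref{Sum_Qureshi}) minus $\psi \left( a\right) \Phi _{n}\left( a\right) $; hence the desired sum is $\Phi _{n}^{\prime }\left( a\right) +\psi \left( a\right) \Phi _{n}\left( a\right) $. On the right-hand side of the closed form, the chain rule (each of $\left( \frac{a}{2}\right) _{n}$ and $\left( \frac{a+1}{2}\right) _{n}$ contributing a factor $\frac{1}{2}$) together with (\ref{D[(x)_n]}) gives
\begin{equation*}
\Phi _{n}^{\prime }\left( a\right) =\frac{2^{2\left( n-1\right) }}{n}\left\{ \left( \frac{a+1}{2}\right) _{n}\left[ \psi \left( \frac{a+1}{2}+n\right) -\psi \left( \frac{a+1}{2}\right) \right] +\left( \frac{a}{2}\right) _{n}\left[ \psi \left( \frac{a}{2}+n\right) -\psi \left( \frac{a}{2}\right) \right] \right\} .
\end{equation*}

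Adding $\psi \left( a\right) \Phi _{n}\left( a\right) $, using $2^{2n-1}=2\cdot 2^{2\left( n-1\right) }$, and grouping the two Pochhammer symbols, the sum in (\ref{Sum_Qureshi}) becomes $\frac{2^{2\left( n-1\right) }}{n}$ times
\begin{equation*}
\left( \frac{a+1}{2}\right) _{n}\left[ 2\psi \left( a\right) +\psi \left( \frac{a+1}{2}+n\right) -\psi \left( \frac{a+1}{2}\right) \right] +\left( \frac{a}{2}\right) _{n}\left[ 2\psi \left( a\right) +\psi \left( \frac{a}{2}+n\right) -\psi \left( \frac{a}{2}\right) \right] .
\end{equation*}
Finally I apply the duplication formula (\ref{Duplication_psi}) with $z=a/2$, that is $2\psi \left( a\right) =\psi \left( \frac{a}{2}\right) +\psi \left( \frac{a+1}{2}\right) +\log 4$; replacing $2\psi \left( a\right) -\psi \left( \frac{a+1}{2}\right) $ by $\psi \left( \frac{a}{2}\right) +\log 4$ and $2\psi \left( a\right) -\psi \left( \frac{a}{2}\right) $ by $\psi \left( \frac{a+1}{2}\right) +\log 4$ turns the last display into the right-hand side of (\ref{Sum_Qureshi}).

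The only ingredient outside the parameter-differentiation technique already used in the paper is the reduction formula for ${}_{2}F_{1}$ at $2$ displayed above, and that is where I expect the real work to lie: pinning it down, either by citation or by a short independent argument (for instance a quadratic/reversal transformation reducing it to a Gauss-type summation, or induction on $n$ through a contiguous relation), together with the somewhat fussy bookkeeping that translates between the factorial form and the Pochhammer form. After that, the proof is just one differentiation with respect to a parameter followed by a single use of (\ref{Duplication_psi}).
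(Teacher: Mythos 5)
Your proposal is correct and follows essentially the same route as the paper: the author likewise rewrites Qureshi's reduction formula ${}_{2}F_{1}(-n,a;1-2n;2)$ via (\ref{Reflection_Pochhammer}) and (\ref{(1/2)_n}) into exactly your identity for $\Phi_n(a)$ (equation (\ref{Qureshi_1})), differentiates with respect to $a$ using (\ref{D[(x)_n]}), and finishes with the duplication formula (\ref{Duplication_psi}). The only difference is that the paper simply cites the argument-$2$ reduction formula from the literature rather than re-deriving it.
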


\begin{proof}
Apply the reflection formula (\ref{Reflection_Pochhammer})\ and the property
(\ref{(1/2)_n}) to the reduction formula \cite{Qureshi}%
\begin{equation*}
_{2}F_{1}\left( \left.
\begin{array}{c}
-n,a \\
-2n+1%
\end{array}%
\right\vert 2\right) =\frac{1}{\left( \frac{1}{2}\right) _{n}}\left[ \left(
\frac{1+a}{2}\right) _{n}+\left( \frac{a}{2}\right) _{n}\right] ,\quad n\in
\mathbb{N}
,
\end{equation*}%
to arrive at%
\begin{equation}
\sum_{k=0}^{n}\frac{2^{k}\left( a\right) _{k}\left( 2n-k-1\right) !}{%
k!\left( n-k\right) !}=\frac{2^{2n-1}}{n}\left[ \left( \frac{1+a}{2}\right)
_{n}+\left( \frac{a}{2}\right) _{n}\right] .  \label{Qureshi_1}
\end{equation}%
Differentiate (\ref{Qureshi_1})\ with respect to parameter $a$, taking into
account (\ref{Duplication_psi}) to obtain (\ref{Sum_Qureshi}), as we wanted
to prove.
\end{proof}

\begin{corollary}
For $a=1$, taking into account (\ref{psi(1+n)}), (\ref{psi(n+1/2)}) and (\ref%
{(1/2)_n}), Eqn. (\ref{Sum_Qureshi})\ is reduced to%
\begin{eqnarray*}
&&\sum_{k=0}^{n}\frac{2^{k}\left( 2n-k-1\right) !}{\left( n-k\right) !}\psi
\left( k+1\right) \\
&=&\frac{\left( n-1\right) !}{4}\left[ 4^{n}\left( H_{n}-2\gamma \right) +%
{2n \choose n}\left( 2\,H_{2n}-H_{n}-2\gamma \right) \right] ,
\end{eqnarray*}%
or equivalently, reversing the sum order,%
\begin{eqnarray*}
&&\sum_{k=0}^{n}\frac{\left( n+k-1\right) !}{2^{k}k!}\psi \left( n+1-k\right)
\\
&=&\frac{\left( n-1\right) !}{2^{n+2}}\left[ 4^{n}\left( H_{n}-2\gamma
\right) + {2n \choose n}\left( 2\,H_{2n}-H_{n}-2\gamma \right) \right] .
\end{eqnarray*}
\end{corollary}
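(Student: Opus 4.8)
The plan is to substitute $a=1$ directly into the summation formula (\ref{Sum_Qureshi}) and then simplify both sides using the elementary identities collected in Section~\ref{Section: Preliminaires}. On the left-hand side, $\left(1\right)_{k}=k!$ cancels the $k!$ in the denominator, so the left side becomes $\sum_{k=0}^{n}\frac{2^{k}\left(2n-k-1\right)!}{\left(n-k\right)!}\psi\left(k+1\right)$, which is exactly the claimed left-hand side.

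For the right-hand side, at $a=1$ one has $\frac{1+a}{2}=1$ and $\frac{a}{2}=\frac{1}{2}$, so the two Pochhammer symbols there reduce to $\left(1\right)_{n}=n!$ and, by (\ref{(1/2)_n}), $\left(\frac{1}{2}\right)_{n}=\frac{\left(2n\right)!}{4^{n}n!}$. The digamma arguments appearing are $n+1$, $\frac{1}{2}$, $n+\frac{1}{2}$ and $1$; using (\ref{psi(1+n)}), (\ref{psi(1/2)}), (\ref{psi(n+1/2)}) and (\ref{psi(1)}) I would verify that the two bracketed combinations collapse to $\psi\left(n+1\right)+\psi\left(\frac{1}{2}\right)+\log 4=H_{n}-2\gamma$ and $\psi\left(n+\frac{1}{2}\right)+\psi\left(1\right)+\log 4=2H_{2n}-H_{n}-2\gamma$, the $\log 4$ terms cancelling the $-\log 4$ hidden inside $\psi\left(\frac{1}{2}\right)$ and $\psi\left(n+\frac{1}{2}\right)$. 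Substituting these back, the factor $\frac{4^{n-1}}{n}$ multiplied by $n!$ gives $4^{n-1}\left(n-1\right)!$, while $\frac{4^{n-1}}{n}\cdot\frac{\left(2n\right)!}{4^{n}n!}=\frac{1}{4}\left(n-1\right)!{2n \choose n}$ once one writes $\frac{\left(2n\right)!}{n!\,n}=\left(n-1\right)!{2n \choose n}$. Pulling out the common factor $\frac{\left(n-1\right)!}{4}$ and using $4^{n-1}=\frac{4^{n}}{4}$ then produces the stated closed form.

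The equivalent reversed form follows from the substitution $k\mapsto n-k$ in $\sum_{k=0}^{n}\frac{2^{k}\left(2n-k-1\right)!}{\left(n-k\right)!}\psi\left(k+1\right)$, which turns the summand into $\frac{2^{n-k}\left(n+k-1\right)!}{k!}\psi\left(n+1-k\right)$, together with dividing both sides by $2^{n}$, which replaces $\frac{\left(n-1\right)!}{4}$ by $\frac{\left(n-1\right)!}{2^{n+2}}$. I do not expect a genuine obstacle here: the computation is routine, and the only thing demanding care is the factorial and Pochhammer bookkeeping — getting the $n!$ versus $\left(n-1\right)!$ reductions and the $\log 4$ cancellations exactly right — so the real risk is a clerical slip rather than any conceptual difficulty.
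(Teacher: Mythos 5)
Your proposal is correct and is exactly the paper's route: substitute $a=1$ into (\ref{Sum_Qureshi}), use $(1)_k=k!$, $(1)_n=n!$, (\ref{(1/2)_n}), and the digamma values (\ref{psi(1)}), (\ref{psi(1/2)}), (\ref{psi(1+n)}), (\ref{psi(n+1/2)}) to collapse the brackets to $H_n-2\gamma$ and $2H_{2n}-H_n-2\gamma$, then reindex $k\mapsto n-k$ and divide by $2^n$ for the second form. All the factorial/binomial bookkeeping in your sketch checks out.
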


\section{Infinite sums involving digamma function}\label{Section: Infinte
sums}

\begin{theorem}
For $\mathrm{Re}\left( c-a-b\right) >0$, the following infinite series holds
true:%
\begin{eqnarray}
&&\sum_{k=0}^{\infty }\frac{\left( a\right) _{k}\left( b\right) _{k}}{%
k!\left( c\right) _{k}}\psi \left( a+k\right)  \label{Gauss_Psi} \\
&=&\frac{\Gamma \left( c\right) \Gamma \left( c-a-b\right) }{\Gamma \left(
c-a\right) \Gamma \left( c-b\right) }\left[ \psi \left( c-a\right) -\psi
\left( c-a-b\right) +\psi \left( a\right) \right] .  \nonumber
\end{eqnarray}
\end{theorem}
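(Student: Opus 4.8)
The plan is to mimic the proof of the Chu--Vandermonde result \eqref{Chu_Vandermonde_psi} but starting from Gauss's summation theorem instead. First I would recall Gauss's theorem \cite[Theorem 2.2.2]{Andrews}, namely
\begin{equation*}
{}_{2}F_{1}\left( \left.
\begin{array}{c}
a,b \\
c
\end{array}
\right\vert 1\right) =\frac{\Gamma \left( c\right) \Gamma \left( c-a-b\right) }{\Gamma \left( c-a\right) \Gamma \left( c-b\right) },\qquad \mathrm{Re}\left( c-a-b\right) >0,
\end{equation*}
which converges precisely under the stated hypothesis $\mathrm{Re}\left( c-a-b\right) >0$, so term-by-term differentiation in the parameter $a$ is legitimate on the (open) region of convergence.

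Next I would differentiate both sides with respect to $a$. On the left-hand side, writing the series as $\sum_{k=0}^{\infty }\frac{\left( a\right) _{k}\left( b\right) _{k}}{k!\left( c\right) _{k}}$ and using \eqref{D[(x)_n]} in the form $\frac{d}{da}\left( a\right) _{k}=\left( a\right) _{k}\left[ \psi \left( a+k\right) -\psi \left( a\right) \right] $, one gets
\begin{equation*}
\frac{\partial }{\partial a}\,{}_{2}F_{1}\left( \left.
\begin{array}{c}
a,b \\
c
\end{array}
\right\vert 1\right) =\sum_{k=0}^{\infty }\frac{\left( a\right) _{k}\left( b\right) _{k}}{k!\left( c\right) _{k}}\psi \left( a+k\right) -\psi \left( a\right) \,{}_{2}F_{1}\left( \left.
\begin{array}{c}
a,b \\
c
\end{array}
\right\vert 1\right) ,
\end{equation*}
and the trailing ${}_{2}F_{1}$ is again replaced by the Gauss closed form. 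On the right-hand side, I would differentiate $\Gamma \left( c\right) \Gamma \left( c-a-b\right) \big/ \left[ \Gamma \left( c-a\right) \Gamma \left( c-b\right) \right] $ logarithmically: only the factors $\Gamma \left( c-a-b\right) $ and $\Gamma \left( c-a\right) $ depend on $a$, and $\frac{\partial }{\partial a}\log \Gamma \left( c-a-b\right) =-\psi \left( c-a-b\right) $, $\frac{\partial }{\partial a}\log \Gamma \left( c-a\right) =-\psi \left( c-a\right) $, so the derivative equals $\frac{\Gamma \left( c\right) \Gamma \left( c-a-b\right) }{\Gamma \left( c-a\right) \Gamma \left( c-b\right) }\left[ \psi \left( c-a\right) -\psi \left( c-a-b\right) \right] $. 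Equating the two expressions and moving the $\psi(a)$ term across yields \eqref{Gauss_Psi}.

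The only real subtlety—not a serious obstacle—is justifying the interchange of differentiation and summation: this needs local uniform convergence of the differentiated series in a complex neighbourhood of the chosen parameters, which holds on compact subsets of the region $\mathrm{Re}\left( c-a-b\right) >0$ by the standard ${}_{2}F_{1}$ convergence estimates (the extra $\psi \left( a+k\right) \sim \log k$ factor does not spoil convergence since the terms decay like $k^{-1-\mathrm{Re}\left( c-a-b\right) }$ up to the logarithm). Everything else is the same bookkeeping with \eqref{D[(x)_n]} and the digamma function as in the Chu--Vandermonde case. As consistency checks one can set $b=-n$ to recover \eqref{Chu_Vandermonde_psi}, and specialize $a=1$ using \eqref{psi(1)} to obtain a corollary in the same spirit as before.
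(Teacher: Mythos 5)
Your proposal is correct and follows exactly the paper's route: the paper's proof consists precisely of differentiating Gauss's summation theorem with respect to the parameter $a$ (via \eqref{D[(x)_n]}), which is what you carry out, only with more detail on the logarithmic differentiation and the justification of term-by-term differentiation. No discrepancies to report.
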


\begin{proof}
Differentiate Gauss summation formula \cite[Theorem 2.2.2]{Andrews}:
\begin{eqnarray}
&&_{2}F_{1}\left( \left.
\begin{array}{c}
a,b \\
c%
\end{array}%
\right\vert 1\right) =\sum_{k=0}^{\infty }\frac{\left( a\right) _{k}\left(
b\right) _{k}}{k!\left( c\right) _{k}}=\frac{\Gamma \left( c\right) \Gamma
\left( c-a-b\right) }{\Gamma \left( c-a\right) \Gamma \left( c-b\right) },
\label{Gauss_summation} \\
&&\mathrm{Re}\left( c-a-b\right) >0,  \nonumber
\end{eqnarray}%
with respect to the parameter $a$.
\end{proof}

\begin{remark}
In \cite[Addendum. Eqn. 55.4.5.2]{Hansen}, we found an equivalent form, but
with an erratum:\
\begin{eqnarray*}
&&\sum_{k=0}^{\infty }\frac{\left( b\right) _{k}\left( c\right) _{k}}{%
k!\left( a\right) _{k}}\left[ \psi \left( c-k\right) -\psi \left( c\right) %
\right] \\
\neq &&\frac{\Gamma \left( a\right) \Gamma \left( a-b-c\right) }{%
\Gamma \left( a-b\right) \Gamma \left( a-c\right) }\left[ \psi \left(
a-c\right) -\psi \left( a-b-c\right) \right] , \\
&&\mathrm{Re}\left( c+b-a\right) <1,
\end{eqnarray*}%
where we have to change in the sum $\psi \left( c-k\right) $ by $\psi \left(
c+k\right) $. Also, the condition seems to be wrong.
\end{remark}

\begin{corollary}
For the particular case $c=2$ and $b=1/2$, taking into account (\ref%
{psi(1+z)}), (\ref{psi(1-z)-psi(z)}),\ we recover the formula given in \cite[%
Eqn. 6.2.1(67)]{Brychov},
\begin{eqnarray}
&&\sum_{k=0}^{\infty }\frac{\left( a\right) _{k}\left( \frac{1}{2}\right)
_{k}}{k!\left( k+1\right) !}\psi \left( a+k\right)  \label{Brychov_1} \\
&=&\frac{2\,\Gamma \left( \frac{3}{2}-a\right) }{\sqrt{\pi }\Gamma \left(
2-a\right) }\left[ \frac{1}{1-a}+\pi \cot \left( \pi a\right) +2\psi \left(
a\right) -\psi \left( \frac{3}{2}-a\right) \right] ,  \nonumber \\
&&\mathrm{Re\,}a<1.  \nonumber
\end{eqnarray}%
However, from (\ref{Gauss_Psi}), we can extend the validity of (\ref%
{Brychov_1})\ to $\mathrm{Re\,}a<3/2$.
\end{corollary}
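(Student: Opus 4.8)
The plan is to specialize the general identity (\ref{Gauss_Psi}) at $c=2$, $b=1/2$ and then rewrite the right-hand side into the shape of (\ref{Brychov_1}) using only the elementary properties of the digamma function collected in Section~\ref{Section: Preliminaires}.

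First I would treat the left-hand side. Since $\left( 2\right) _{k}=\Gamma \left( k+2\right) /\Gamma \left( 2\right) =\left( k+1\right) !$, setting $c=2$ and $b=1/2$ turns the summand $\frac{\left( a\right) _{k}\left( b\right) _{k}}{k!\left( c\right) _{k}}\psi \left( a+k\right) $ of (\ref{Gauss_Psi}) into $\frac{\left( a\right) _{k}\left( 1/2\right) _{k}}{k!\left( k+1\right) !}\psi \left( a+k\right) $, which is precisely the series on the left of (\ref{Brychov_1}).

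Next I would simplify the closed form. With $c=2$, $b=1/2$ the gamma prefactor in (\ref{Gauss_Psi}) becomes $\frac{\Gamma \left( 2\right) \Gamma \left( 3/2-a\right) }{\Gamma \left( 2-a\right) \Gamma \left( 3/2\right) }$, and using $\Gamma \left( 2\right) =1$ together with $\Gamma \left( 3/2\right) =\sqrt{\pi }/2$ this equals $\frac{2\,\Gamma \left( 3/2-a\right) }{\sqrt{\pi }\,\Gamma \left( 2-a\right) }$, matching the prefactor of (\ref{Brychov_1}). It then remains to identify the brackets, i.e.\ to show
\[
\psi \left( 2-a\right) -\psi \left( \frac{3}{2}-a\right) +\psi \left( a\right) =\frac{1}{1-a}+\pi \cot \left( \pi a\right) +2\psi \left( a\right) -\psi \left( \frac{3}{2}-a\right) .
\]
Cancelling $-\psi \left( 3/2-a\right) $ and one copy of $\psi \left( a\right) $ reduces this to $\psi \left( 2-a\right) -\psi \left( a\right) =\frac{1}{1-a}+\pi \cot \left( \pi a\right) $, which I would obtain by writing $\psi \left( 2-a\right) =\psi \left( \left( 1-a\right) +1\right) =\frac{1}{1-a}+\psi \left( 1-a\right) $ from (\ref{psi(1+z)}), and then $\psi \left( 1-a\right) -\psi \left( a\right) =\pi \cot \left( \pi a\right) $ from the reflection formula (\ref{psi(1-z)-psi(z)}).

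Finally, for the range of validity: the hypothesis $\mathrm{Re}\left( c-a-b\right) >0$ of the parent theorem becomes $\mathrm{Re}\left( 3/2-a\right) >0$, that is $\mathrm{Re}\,a<3/2$, which is the announced enlargement of Brychov's stated condition $\mathrm{Re}\,a<1$. The computation is entirely routine; the only mild point of care is that (\ref{psi(1+z)}) and (\ref{psi(1-z)-psi(z)}) must be read as identities between meromorphic functions, so that the matching of the simple poles of $\Gamma \left( 3/2-a\right) $ and of $\cot \left( \pi a\right) $ is consistent across the whole half-plane $\mathrm{Re}\,a<3/2$.
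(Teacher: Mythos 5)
Your computation is correct and follows exactly the route the paper intends: specialize (\ref{Gauss_Psi}) at $c=2$, $b=1/2$, note $(2)_k=(k+1)!$ and $\Gamma(3/2)=\sqrt{\pi}/2$, and convert $\psi(2-a)-\psi(a)$ into $\frac{1}{1-a}+\pi\cot(\pi a)$ via (\ref{psi(1+z)}) and (\ref{psi(1-z)-psi(z)}), with the condition $\mathrm{Re}\,a<3/2$ coming from $\mathrm{Re}(c-a-b)>0$. Nothing is missing.
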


\begin{corollary}
For $b>0$, the following expansion of the beta function holds true:
\begin{equation*}
\mathrm{B}\left( a,b\right) =-\sum_{k=0}^{\infty }\frac{\left( -b\right) _{k}%
}{k!}\psi \left( a+k\right) .
\end{equation*}
\end{corollary}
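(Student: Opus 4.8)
The plan is to read off this identity as the confluent limit $c\to a$ of the theorem just proved. Starting from (\ref{Gauss_Psi}) and replacing $b$ by $-b$ — legitimate as long as $\mathrm{Re}\left(c-a+b\right)>0$, in particular for $b>0$ and $c$ near $a$ — one obtains
\[
\sum_{k=0}^{\infty }\frac{\left( a\right) _{k}\left( -b\right) _{k}}{k!\left( c\right) _{k}}\,\psi \left( a+k\right) =\frac{\Gamma \left( c\right) \Gamma \left( c-a+b\right) }{\Gamma \left( c-a\right) \Gamma \left( c+b\right) }\left[ \psi \left( c-a\right) -\psi \left( c-a+b\right) +\psi \left( a\right) \right] .
\]
Now I would let $c\to a^{+}$. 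On the left-hand side $\left( a\right) _{k}/\left( c\right) _{k}\to 1$ termwise; and since for $b>0$ the general term is, for $c$ in a small neighbourhood of $a$, dominated by a constant multiple of $k^{-b/2-1}\log k$ (using the Stirling asymptotics of the Pochhammer ratios together with $\psi\left(a+k\right)=O(\log k)$), dominated convergence lets me pass the limit inside the series, producing $\sum_{k\ge 0}\frac{\left(-b\right)_{k}}{k!}\psi\left(a+k\right)$, which is exactly the sum in question.

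The delicate point is the right-hand side, which at $c=a$ is an indeterminate $0\cdot\infty$ form: $1/\Gamma\left(c-a\right)\to 0$ while the bracket diverges through $\psi\left(c-a\right)$. To handle it I would set $c=a+\varepsilon$ and extract the leading behaviour as $\varepsilon\to 0^{+}$. From (\ref{psi(1+z)}) and (\ref{psi(1)}) one has $\psi\left(\varepsilon\right)=-\varepsilon^{-1}-\gamma+O\left(\varepsilon\right)$, and correspondingly $1/\Gamma\left(\varepsilon\right)=\varepsilon+\gamma\varepsilon^{2}+O\left(\varepsilon^{3}\right)$; hence
\[
\frac{\Gamma\left(a+\varepsilon\right)\Gamma\left(b+\varepsilon\right)}{\Gamma\left(\varepsilon\right)\Gamma\left(a+b+\varepsilon\right)}\left[\psi\left(\varepsilon\right)-\psi\left(b+\varepsilon\right)+\psi\left(a\right)\right]\;\longrightarrow\;-\,\frac{\Gamma\left(a\right)\Gamma\left(b\right)}{\Gamma\left(a+b\right)},
\]
the $\varepsilon^{-1}$ singularity of the bracket being cancelled exactly by the $\varepsilon$ coming from $1/\Gamma\left(\varepsilon\right)$, while every remaining $O\left(\varepsilon\right)$ term vanishes. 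By (\ref{Beta_property}) the right-hand side tends to $-\mathrm{B}\left(a,b\right)$, and equating the two limits gives the stated formula.

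Thus the only real obstacle is the careful bookkeeping of this $0\cdot\infty$ cancellation on the right, together with the routine (but needed) justification of the termwise passage to the limit on the left. As an alternative, self-contained route I could instead combine the binomial expansion $\sum_{k\ge 0}\frac{\left(-b\right)_{k}}{k!}x^{k}=\left(1-x\right)^{b}$ — whence $\sum_{k\ge 0}\frac{\left(-b\right)_{k}}{k!}=0$ for $b>0$ — with the integral representation $\psi\left(z\right)=-\gamma+\int_{0}^{1}\frac{1-t^{z-1}}{1-t}\,dt$: the $-\gamma$ and the constant pieces then drop out, and interchanging sum and integral leaves $-\int_{0}^{1}t^{a-1}\left(1-t\right)^{b-1}dt=-\mathrm{B}\left(a,b\right)$, valid for $\mathrm{Re}\,a>0$ and $b>0$.
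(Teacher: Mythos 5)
Your proposal is correct and follows essentially the paper's own route: replace $b$ by $-b$ in (\ref{Gauss_Psi}), let $c\rightarrow a$, and resolve the resulting $0\cdot \infty$ form on the right-hand side, which the paper does via the lemma $\lim_{x\rightarrow 0}\psi \left( x\right) /\Gamma \left( x\right) =-1$ and you do by the equivalent expansions $\psi \left( \varepsilon \right) =-\varepsilon ^{-1}-\gamma +O\left( \varepsilon \right) $ and $1/\Gamma \left( \varepsilon \right) =\varepsilon +\gamma \varepsilon ^{2}+O\left( \varepsilon ^{3}\right) $. Your extra dominated-convergence justification for passing the limit through the series, and the alternative binomial/integral-representation route, go beyond what the paper records but do not change the method.
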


\begin{proof}
Calculate the following limit, taking into account (\ref{psi(1)})\ and (\ref%
{psi(1+z)}):%
\begin{equation}
\lim_{x\rightarrow 0}\frac{\psi \left( x\right) }{\Gamma \left( x\right) }%
=\lim_{x\rightarrow 0}\frac{1}{\Gamma \left( x\right) }\left[ \psi \left(
x+1\right) -\frac{1}{x}\right] =-\lim_{x\rightarrow 0}\frac{1}{\Gamma \left(
x+1\right) }=-1.  \label{Limit_Gauss}
\end{equation}%
Take $c=a$ in (\ref{Gauss_Psi}), and apply (\ref{Limit_Gauss}) and (\ref%
{Beta_property}), to obtain%
\begin{eqnarray*}
&&\sum_{k=0}^{\infty }\frac{\left( -b\right) _{k}}{k!}\psi \left( a+k\right)
\\
&=&\lim_{c\rightarrow a}\frac{\Gamma \left( c\right) \Gamma \left(
c-a+b\right) }{\Gamma \left( c-a\right) \Gamma \left( c+b\right) }\left[
\psi \left( c-a\right) -\psi \left( c-a+b\right) +\psi \left( a\right) %
\right] \\
&=&-\frac{\Gamma \left( a\right) \Gamma \left( b\right) }{\Gamma \left(
a+b\right) }=-\mathrm{B}\left( a,b\right) ,
\end{eqnarray*}
\end{proof}

\begin{remark}
If we differentiate Gauss summation formula (\ref{Gauss_summation}) with
respect to parameter $c$ and we apply (\ref{D[1/(x)_n]}), we will obtain for
$\mathrm{Re}\left( c-a-b\right) >0$,%
\begin{eqnarray*}
&&\sum_{k=0}^{\infty }\frac{\left( a\right) _{k}\left( b\right) _{k}}{%
k!\left( c\right) _{k}}\psi \left( c+k\right) \\
&=&\frac{\Gamma \left( c\right) \Gamma \left( c-a-b\right) }{\Gamma \left(
c-a\right) \Gamma \left( c-b\right) }\left[ \psi \left( c-a\right) +\psi
\left( c-b\right) -\psi \left( c-a-b\right) \right] ,
\end{eqnarray*}%
which is equivalent to \cite[Addendum. Eqn. 55.4.5.1]{Hansen}, but the
condition $\mathrm{Re}\left( a+b-c\right) <1$ seems to be wrong.
\end{remark}

\begin{theorem}
The following series holds true:%
\begin{eqnarray*}
&&\sum_{k=0}^{\infty }\frac{\left( a\right) _{k}\left( 1-a\right) _{k}}{%
2^{k}k!\left( b\right) _{k}}\psi \left( b+k\right) \\
&=&\frac{\sqrt{\pi }\,\Gamma \left( b\right) }{2^{b}\Gamma \left( \frac{a+b}{%
2}\right) \Gamma \left( \frac{b-a+1}{2}\right) }\left[ \psi \left( \frac{a+b%
}{2}\right) +\psi \left( \frac{b-a+1}{2}\right) +\log 4\right] .
\end{eqnarray*}
\end{theorem}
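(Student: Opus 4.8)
The plan is to mimic the method used for the Gauss summation theorem and its $c$-derivative remark, but applied to a quadratic transformation / reduction formula whose right-hand side is a ratio of gamma functions. Specifically, I would look for a known closed-form evaluation of a $_3F_2$ (or a suitably specialized $_2F_1$) of the shape
\begin{equation*}
\sum_{k=0}^{\infty }\frac{\left( a\right) _{k}\left( 1-a\right) _{k}}{2^{k}k!\left( b\right) _{k}}=\frac{\sqrt{\pi }\,\Gamma \left( b\right) }{2^{b}\Gamma \left( \frac{a+b}{2}\right) \Gamma \left( \frac{b-a+1}{2}\right) }.
\end{equation*}
This is a Gauss-type evaluation at argument $1/2$; it is one of the classical Bailey/Kummer quadratic-transformation summations (the $z=1/2$ second Gauss theorem for $_2F_1(a,1-a;b;1/2)$). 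I would first cite this evaluation from a standard reference (Andrews--Askey--Roy, Bailey, or Prudnikov), writing it as $F(a,b):=\,_2F_1(a,1-a;b;1/2)=\frac{\sqrt{\pi}\,\Gamma(b)}{2^b\,\Gamma(\frac{a+b}{2})\Gamma(\frac{b-a+1}{2})}$.

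The second step is to differentiate this identity with respect to the parameter $b$. On the left-hand side, $b$ appears only in the denominator Pochhammer symbol $(b)_k$, so formula (\ref{D[1/(x)_n]}) gives
\begin{equation*}
\frac{\partial }{\partial b}F\left( a,b\right) =\sum_{k=0}^{\infty }\frac{\left( a\right) _{k}\left( 1-a\right) _{k}}{2^{k}k!\left( b\right) _{k}}\left[ \psi \left( b\right) -\psi \left( b+k\right) \right] ,
\end{equation*}
i.e.\ $\psi(b)\,F(a,b)$ minus the target sum. On the right-hand side I differentiate the logarithm: $\partial_b\log F=\log 2\cdot(-1)+\psi(b)-\tfrac12\psi(\frac{a+b}{2})-\tfrac12\psi(\frac{b-a+1}{2})$, so that
\begin{equation*}
\frac{\partial }{\partial b}F\left( a,b\right) =F\left( a,b\right) \left[ \psi \left( b\right) -\log 2-\frac{1}{2}\psi \left( \frac{a+b}{2}\right) -\frac{1}{2}\psi \left( \frac{b-a+1}{2}\right) \right] .
\end{equation*}
Equating the two expressions, the $\psi(b)F(a,b)$ terms cancel, and solving for the target sum gives
\begin{equation*}
\sum_{k=0}^{\infty }\frac{\left( a\right) _{k}\left( 1-a\right) _{k}}{2^{k}k!\left( b\right) _{k}}\psi \left( b+k\right) =F\left( a,b\right) \left[ \log 2+\frac{1}{2}\psi \left( \frac{a+b}{2}\right) +\frac{1}{2}\psi \left( \frac{b-a+1}{2}\right) \right] .
\end{equation*}
Multiplying through by $2$ inside the bracket (absorbing $2\log 2=\log 4$) and substituting the closed form for $F(a,b)$ yields exactly the claimed identity, up to the overall factor of $\tfrac12$ versus the bracket normalization, which I would reconcile by carrying a factor carefully.

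The main obstacle is not the differentiation — that is routine given (\ref{D[1/(x)_n]}) and the duplication formula (\ref{Duplication_psi}) is not even needed here since the gamma arguments are already ``halved'' — but rather (i) pinning down the correct base reduction formula with the precise normalization and its domain of validity (the second Gauss theorem at $1/2$ requires care about convergence and about which of $\Gamma(\frac{a+b}{2})$, $\Gamma(\frac{b-a+1}{2})$ appear), and (ii) justifying that termwise differentiation of the series with respect to $b$ is legitimate, which follows from uniform convergence on compact subsets of the parameter region where $\mathrm{Re}\,b$ is large enough (or by appealing to analyticity in $b$ and analytic continuation, exactly as the paper implicitly does for the Gauss case). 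I would state the result under the hypothesis that guarantees convergence of the original $_2F_1(a,1-a;b;1/2)$ series, note the analytic continuation in $b$, and, as the paper does elsewhere, remark that the identity has been checked numerically with MATHEMATICA.
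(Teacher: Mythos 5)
Your proposal is correct in method and essentially identical to the paper's proof, which simply differentiates the Bailey-type evaluation ${}_{2}F_{1}\left(a,1-a;b;\frac{1}{2}\right)=2^{1-b}\sqrt{\pi }\,\Gamma \left( b\right) /\left[ \Gamma \left( \frac{a+b}{2}\right) \Gamma \left( \frac{b-a+1}{2}\right) \right]$ (Prudnikov, Eqn.\ 7.3.7(8)) with respect to $b$ using (\ref{D[1/(x)_n]}). The factor-of-two you flagged comes only from your misstated normalization of the base formula: with the correct constant $2^{1-b}$ (not $2^{-b}$), the target sum equals one half of that closed form times $\left[ \psi \left( \frac{a+b}{2}\right) +\psi \left( \frac{b-a+1}{2}\right) +\log 4\right]$, which is exactly the stated right-hand side.
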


\begin{proof}
Differentiate the summation formula \cite[Eqn. 7.3.7(8)]{Prudnikov3}%
\begin{equation*}
_{2}F_{1}\left( \left.
\begin{array}{c}
a,1-a \\
b%
\end{array}%
\right\vert \frac{1}{2}\right) =\sum_{k=0}^{\infty }\frac{\left( a\right)
_{k}\left( 1-a\right) _{k}}{2^{k}k!\left( b\right) _{k}}=\frac{2^{1-b}\sqrt{%
\pi }\,\Gamma \left( b\right) }{\Gamma \left( \frac{a+b}{2}\right) \Gamma
\left( \frac{b-a+1}{2}\right) },
\end{equation*}%
with respect to parameter $b$.
\end{proof}

\begin{corollary}
Take $a=c$ and apply (\ref{psi(1/2)})\ to obtain%
\begin{equation*}
\sum_{k=0}^{\infty }\frac{\left( 1-a\right) _{k}}{2^{k}k!}\psi \left(
a+k\right) =\frac{\psi \left( a\right) -\gamma }{2^{a}}.
\end{equation*}
\end{corollary}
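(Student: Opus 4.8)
The plan is to derive this corollary as an immediate specialization of the preceding theorem, obtained by setting the free parameter $b$ equal to $a$. First I would substitute $b=a$ into that theorem's summation formula. On the left-hand side the ratio $(a)_{k}/(b)_{k}$ becomes identically $1$ for every $k\geq 0$, and $\psi(b+k)$ becomes $\psi(a+k)$, so the series collapses to $\sum_{k=0}^{\infty}\frac{(1-a)_{k}}{2^{k}k!}\,\psi(a+k)$, which is precisely the left-hand side of the claimed identity. Since no Pochhammer symbol in a denominator acquires a pole and no gamma factor diverges when $b=a$, the substitution is legitimate term by term and (granted convergence of the series for the relevant range of $a$) needs no limiting argument.

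Next I would simplify the right-hand side under $b=a$. The arguments of the two gamma functions become $\frac{a+b}{2}=a$ and $\frac{b-a+1}{2}=\frac{1}{2}$, so, using $\Gamma\!\left(\frac{1}{2}\right)=\sqrt{\pi}$, the prefactor reduces according to
\[
\frac{\sqrt{\pi}\,\Gamma(a)}{2^{a}\,\Gamma(a)\,\Gamma\!\left(\frac{1}{2}\right)}=\frac{\sqrt{\pi}}{2^{a}\,\Gamma\!\left(\frac{1}{2}\right)}=\frac{1}{2^{a}},
\]
while the bracketed factor becomes $\psi(a)+\psi\!\left(\frac{1}{2}\right)+\log 4$. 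Hence the right-hand side equals $2^{-a}\left[\psi(a)+\psi\!\left(\frac{1}{2}\right)+\log 4\right]$.

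Finally I would invoke (\ref{psi(1/2)}), namely $\psi\!\left(\frac{1}{2}\right)=-\gamma-\log 4$, so that $\psi\!\left(\frac{1}{2}\right)+\log 4=-\gamma$, and the right-hand side collapses to $\frac{\psi(a)-\gamma}{2^{a}}$, which is the asserted closed form. The only step meriting any attention is the one flagged above — confirming that putting $b=a$ is admissible rather than a mere formal limit — but because neither side presents an indeterminate form at $b=a$, this is routine bookkeeping and there is no real obstacle; the corollary follows directly.
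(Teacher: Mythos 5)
Your proof is correct and follows exactly the route the paper intends: substitute $b=a$ into the preceding theorem (the corollary's ``$a=c$'' is evidently a typo for this), simplify the gamma prefactor with $\Gamma\left(\tfrac{1}{2}\right)=\sqrt{\pi}$, and apply $\psi\left(\tfrac{1}{2}\right)=-\gamma-\log 4$. Nothing further is needed.
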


\begin{theorem}
For $\left\vert z\right\vert <1$, the following series holds true:%
\begin{eqnarray*}
&&\sum_{k=0}^{\infty }\frac{\left( b\right) _{k}\,z^{k}}{\left( k+1\right) !}%
\psi \left( k+b\right) \\
&=&\frac{\left( 1-z\right) ^{1-b}}{z\left( 1-b\right) ^{2}}\left\{ \left(
1-b\right) \log \left( 1-z\right) -\left[ 1-\left( 1-z\right) ^{b-1}\right] %
\left[ 1+\left( 1-b\right) \psi \left( b\right) \right] \right\} .
\end{eqnarray*}
\end{theorem}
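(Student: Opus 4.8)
The plan is to follow the same strategy used throughout the paper: start from a closed-form evaluation of a $_{2}F_{1}$ and differentiate it with respect to one of its parameters. Since $\left(1\right)_{k}/\left(2\right)_{k}=1/\left(k+1\right)$, the series in question is a Gauss function, and the relevant reduction formula is
\[
_{2}F_{1}\left( \left.
\begin{array}{c}
1,b \\
2
\end{array}
\right\vert z\right) =\sum_{k=0}^{\infty }\frac{\left( b\right) _{k}z^{k}}{\left( k+1\right) !}=\frac{1-\left( 1-z\right) ^{1-b}}{z\left( 1-b\right) }.
\]
I would obtain this either by citing a standard table or, more self-containedly, by integrating the binomial series $\sum_{k\geq 0}\left( b\right) _{k}t^{k}/k!=\left( 1-t\right) ^{-b}$ termwise over $\left[ 0,z\right] $ and dividing by $z$; the termwise operations are legitimate for $\left\vert z\right\vert <1$ because the series converges uniformly on compact subsets of the unit disk. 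The value at $b=1$, where the right-hand side is a removable singularity, is recovered by continuity.

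Next I would differentiate both sides of this reduction formula with respect to $b$, applying $(\ref{D[(x)_n]})$ in the form $\partial _{b}\left( b\right) _{k}=\left( b\right) _{k}\left[ \psi \left( b+k\right) -\psi \left( b\right) \right] $ to the series. This gives
\[
\sum_{k=0}^{\infty }\frac{\left( b\right) _{k}z^{k}}{\left( k+1\right) !}\psi \left( b+k\right) -\psi \left( b\right) \sum_{k=0}^{\infty }\frac{\left( b\right) _{k}z^{k}}{\left( k+1\right) !}=\frac{\partial }{\partial b}\left[ \frac{1-\left( 1-z\right) ^{1-b}}{z\left( 1-b\right) }\right] ,
\]
so the target sum equals $\psi \left( b\right)\,_{2}F_{1}\left( 1,b;2;z\right) $ plus the derivative on the right. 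Using $\partial _{b}\left( 1-z\right) ^{1-b}=-\left( 1-z\right) ^{1-b}\log \left( 1-z\right) $ together with the quotient rule, that derivative evaluates to
\[
\frac{\left( 1-b\right) \left( 1-z\right) ^{1-b}\log \left( 1-z\right) +1-\left( 1-z\right) ^{1-b}}{z\left( 1-b\right) ^{2}}.
\]

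Finally I would substitute the closed form for $_{2}F_{1}\left( 1,b;2;z\right) $ back into $\psi \left( b\right)\,_{2}F_{1}\left( 1,b;2;z\right) $, place the whole expression over the common denominator $z\left( 1-b\right) ^{2}$, and simplify. The one step that needs a little care is the final factorization: using $1-\left( 1-z\right) ^{1-b}=-\left( 1-z\right) ^{1-b}\left[ 1-\left( 1-z\right) ^{b-1}\right] $, the numerator collapses to $\left( 1-z\right) ^{1-b}$ times exactly $\left( 1-b\right) \log \left( 1-z\right) -\left[ 1-\left( 1-z\right) ^{b-1}\right] \left[ 1+\left( 1-b\right) \psi \left( b\right) \right] $, which is the asserted identity. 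I do not expect a genuine obstacle here: the computation is elementary once the correct reduction formula is identified, and the only pitfalls are justifying termwise differentiation on $\left\vert z\right\vert <1$ and keeping the exponents $1-b$ and $b-1$ straight during the last simplification.
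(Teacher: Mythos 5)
Your proposal is correct and follows essentially the same route as the paper: differentiating the reduction formula $_{2}F_{1}(1,b;2;z)=\bigl[(1-z)^{1-b}-1\bigr]/\bigl[z(b-1)\bigr]$ (Prudnikov, Eqn.\ 7.3.1(125)) with respect to $b$ via the Pochhammer derivative formula; your version merely adds the elementary derivation of that reduction formula and carries out the simplification explicitly, both of which check out.
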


\begin{proof}
Differentiate the reduction formula \cite[Eqn. 7.3.1(125)]{Prudnikov3},
\begin{equation*}
_{2}F_{1}\left( \left.
\begin{array}{c}
1,b \\
2%
\end{array}%
\right\vert z\right) =\sum_{k=0}^{\infty }\frac{\left( b\right) _{k}}{\left(
k+1\right) !}z^{k}=\frac{\left( 1-z\right) ^{1-b}-1}{z\left( b-1\right) },
\end{equation*}%
with respect to the parameter $b$.
\end{proof}

\begin{remark}
Taking the limit $b\rightarrow 1$, we arrive at
\begin{eqnarray*}
&&\sum_{k=0}^{\infty }\frac{\,z^{k}}{k+1}\psi \left( k+1\right) =\frac{\log
\left( 1-z\right) }{2z}\left[ 2\gamma +\log \left( 1-z\right) \right] , \\
&&\left\vert z\right\vert <1,
\end{eqnarray*}%
which is equivalent to \cite[Eqn. 6.2.1(2)]{Brychov}.
\end{remark}

\begin{remark}
For $b=2$, and taking into account (\ref{psi(1+n)})$\ $for $n=1$, i.e. $\psi
\left( 2\right) =1-\gamma $, we arrive at%
\begin{eqnarray*}
&&\sum_{k=1}^{\infty }\,z^{k}\psi \left( k+1\right) =\frac{\gamma z+\log
\left( 1-z\right) }{z-1}, \\
&&\left\vert z\right\vert <1,
\end{eqnarray*}%
which is equivalent to \cite[Eqn. 6.2.1(1)]{Brychov}.
\end{remark}

\begin{theorem}
For $\left\vert z\right\vert <1$, the following infinite sum holds true:%
\begin{eqnarray*}
&&\sum_{k=0}^{\infty }\frac{z^{k}\left( a+1\right) _{k}\left( b\right) _{k}}{%
k!\left( a\right) _{k}}\psi \left( k+b\right) \\
&=&\frac{\left[ \psi \left( b\right) -\log \left( 1-z\right) \right] \left[
1-\left( 1-\frac{b}{a}\right) z\right] +z/a}{\left( 1-z\right) ^{1+b}}.
\end{eqnarray*}
\end{theorem}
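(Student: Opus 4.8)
The plan is to proceed exactly as in the earlier theorems of this section: first identify a reduction formula (closed form) for the hypergeometric series appearing on the left, and then differentiate that formula with respect to the parameter $b$ by means of (\ref{D[(x)_n]}). The reduction formula needed here is
\[
{}_{2}F_{1}\left(\left.\begin{array}{c} a+1,b \\ a \end{array}\right\vert z\right)=\frac{1-\left(1-\frac{b}{a}\right)z}{\left(1-z\right)^{1+b}},\qquad \left\vert z\right\vert<1 ,
\]
and, rather than quote it, I would derive it on the spot. Since $\left(a+1\right)_{k}/\left(a\right)_{k}=\left(a+k\right)/a$ by (\ref{property_Pochhammer}), the series splits as
\[
\sum_{k=0}^{\infty}\frac{\left(a+1\right)_{k}\left(b\right)_{k}}{k!\left(a\right)_{k}}z^{k}=\frac{1}{a}\sum_{k=0}^{\infty}\frac{\left(b\right)_{k}}{k!}\left(a+k\right)z^{k}=\left(1-z\right)^{-b}+\frac{bz}{a}\left(1-z\right)^{-b-1},
\]
where I have used the binomial series $\sum_{k\geq 0}\left(b\right)_{k}z^{k}/k!=\left(1-z\right)^{-b}$ together with $\sum_{k\geq 0}k\left(b\right)_{k}z^{k}/k!=z\,\frac{d}{dz}\left(1-z\right)^{-b}=bz\left(1-z\right)^{-b-1}$; collecting the common factor $\left(1-z\right)^{-1-b}$ gives the stated closed form.

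Next, since the hypergeometric series and its formal $b$-derivative both converge uniformly on every disk $\left\vert z\right\vert\leq\rho<1$, termwise differentiation in $b$ is legitimate. Applying $\partial/\partial b$ to the left-hand side and using $\partial_{b}\left(b\right)_{k}=\left(b\right)_{k}\left[\psi\left(b+k\right)-\psi\left(b\right)\right]$ from (\ref{D[(x)_n]}),
\[
\frac{\partial}{\partial b}\,{}_{2}F_{1}\left(\left.\begin{array}{c} a+1,b \\ a \end{array}\right\vert z\right)=\sum_{k=0}^{\infty}\frac{\left(a+1\right)_{k}\left(b\right)_{k}}{k!\left(a\right)_{k}}z^{k}\,\psi\left(b+k\right)-\psi\left(b\right)\,\frac{1-\left(1-\frac{b}{a}\right)z}{\left(1-z\right)^{1+b}} .
\]
On the right-hand side, writing $N\left(b\right)=1-\left(1-\frac{b}{a}\right)z$ and $D\left(b\right)=\left(1-z\right)^{1+b}$, one has $N'\left(b\right)=z/a$ and $D'\left(b\right)=D\left(b\right)\log\left(1-z\right)$, so
\[
\frac{\partial}{\partial b}\frac{N\left(b\right)}{D\left(b\right)}=\frac{N'\left(b\right)-N\left(b\right)\log\left(1-z\right)}{D\left(b\right)}=\frac{z/a-\left[1-\left(1-\frac{b}{a}\right)z\right]\log\left(1-z\right)}{\left(1-z\right)^{1+b}} .
\]

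Equating the two derivatives and transposing the $\psi\left(b\right)$ term yields
\[
\sum_{k=0}^{\infty}\frac{\left(a+1\right)_{k}\left(b\right)_{k}}{k!\left(a\right)_{k}}z^{k}\,\psi\left(b+k\right)=\frac{\left[\psi\left(b\right)-\log\left(1-z\right)\right]\left[1-\left(1-\frac{b}{a}\right)z\right]+z/a}{\left(1-z\right)^{1+b}} ,
\]
which is the asserted identity. I expect the only mildly delicate point to be the justification of the termwise $b$-differentiation, which is secured by the local uniform convergence (in $z$) of both the original and the differentiated series on compact subsets of the unit disk; everything else is the elementary algebra of writing the two terms on the right over the common denominator $\left(1-z\right)^{1+b}$. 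As consistency checks one may let $z\to0$ (both sides collapse to $\psi\left(b\right)$) and let $a\to\infty$, which should reduce the identity to $\sum_{k\geq 0}\left(b\right)_{k}z^{k}\psi\left(b+k\right)/k!=\left(1-z\right)^{-b}\left[\psi\left(b\right)-\log\left(1-z\right)\right]$.
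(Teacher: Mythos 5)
Your proposal is correct and follows essentially the same route as the paper: differentiate the reduction formula ${}_{2}F_{1}(a+1,b;a;z)=\left[1-\left(1-\tfrac{b}{a}\right)z\right](1-z)^{-1-b}$ (NIST Eqn. 15.4.19, cited as (\ref{2F1(a+1,b;a;z)}) in the paper) with respect to $b$, using (\ref{D[(x)_n]}) on the series side. The only difference is that you re-derive the reduction formula from the binomial series instead of quoting it, and you spell out the quotient-rule computation and the justification of termwise differentiation, both of which are fine.
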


\begin{proof}
Differentiate the following reduction formula \cite[Eqn. 15.4.19]{NIST}
\begin{equation}
_{2}F_{1}\left( \left.
\begin{array}{c}
a+1,b \\
a%
\end{array}%
\right\vert z\right) =\left[ 1-\left( 1-\frac{b}{a}\right) z\right] \left(
1-z\right) ^{-1-b},  \label{2F1(a+1,b;a;z)}
\end{equation}%
with respect to parameter $b$.
\end{proof}

\begin{corollary}
For the particular case $b=a$, we obtain
\begin{eqnarray*}
&&\sum_{k=0}^{\infty }\frac{z^{k}\left( a+1\right) _{k}}{k!}\psi \left(
k+a\right) \\
&=&\frac{\psi \left( a\right) -\log \left( 1-z\right) +z/a}{\left(
1-z\right) ^{1+a}}.
\end{eqnarray*}
\end{corollary}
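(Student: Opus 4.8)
The final statement is the specialization $b=a$ of the theorem stated just above it, whose proof the author only indicates in one line (differentiate the reduction formula (\ref{2F1(a+1,b;a;z)}) with respect to $b$); I describe how I would carry that differentiation out and then read off the corollary.

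First I would write the left side of (\ref{2F1(a+1,b;a;z)}) as its defining series,
\[
{}_{2}F_{1}\left( \left.
\begin{array}{c}
a+1,b \\
a
\end{array}
\right\vert z\right) =\sum_{k=0}^{\infty }\frac{\left( a+1\right) _{k}\left( b\right) _{k}}{k!\left( a\right) _{k}}\,z^{k},\qquad \left\vert z\right\vert <1 ,
\]
valid when $a$ is not a non-positive integer. Since this is a power series in $z$ of radius of convergence $1$ whose coefficients are entire functions of $b$, it may be differentiated term by term with respect to $b$ on every compact subdisc of $\left\vert z\right\vert <1$; the only $b$-dependent factor is $\left( b\right) _{k}$, and (\ref{D[(x)_n]}) gives $\partial _{b}\left( b\right) _{k}=\left( b\right) _{k}\left[ \psi \left( b+k\right) -\psi \left( b\right) \right] $. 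Hence
\[
\frac{\partial }{\partial b}\,{}_{2}F_{1}\left( \left.
\begin{array}{c}
a+1,b \\
a
\end{array}
\right\vert z\right) =\sum_{k=0}^{\infty }\frac{\left( a+1\right) _{k}\left( b\right) _{k}}{k!\left( a\right) _{k}}\,z^{k}\,\psi \left( b+k\right) -\psi \left( b\right) \,{}_{2}F_{1}\left( \left.
\begin{array}{c}
a+1,b \\
a
\end{array}
\right\vert z\right) ,
\]
where in the subtracted term the remaining series has again been recognised as the left side of (\ref{2F1(a+1,b;a;z)}).

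Next I would differentiate the closed form on the right of (\ref{2F1(a+1,b;a;z)}). Writing $g\left( b\right) =1-\left( 1-b/a\right) z$, so that $g^{\prime }\left( b\right) =z/a$, and using $\partial _{b}\left( 1-z\right) ^{-1-b}=-\log \left( 1-z\right) \left( 1-z\right) ^{-1-b}$, the product rule yields
\[
\frac{\partial }{\partial b}\left\{ \left[ 1-\left( 1-\frac{b}{a}\right) z\right] \left( 1-z\right) ^{-1-b}\right\} =\frac{z/a-\left[ 1-\left( 1-\frac{b}{a}\right) z\right] \log \left( 1-z\right) }{\left( 1-z\right) ^{1+b}} .
\]
Equating the two differentiated expressions, moving $\psi \left( b\right) \,{}_{2}F_{1}$ to the right and replacing it by its closed form (\ref{2F1(a+1,b;a;z)}), and collecting the two terms proportional to $\left[ 1-\left( 1-b/a\right) z\right] $ gives the theorem
\[
\sum_{k=0}^{\infty }\frac{\left( a+1\right) _{k}\left( b\right) _{k}}{k!\left( a\right) _{k}}\,z^{k}\,\psi \left( b+k\right) =\frac{\left[ \psi \left( b\right) -\log \left( 1-z\right) \right] \left[ 1-\left( 1-\frac{b}{a}\right) z\right] +z/a}{\left( 1-z\right) ^{1+b}} .
\]
Finally, setting $b=a$ collapses $\left( a+1\right) _{k}\left( b\right) _{k}/\left( a\right) _{k}$ to $\left( a+1\right) _{k}$ and $1-\left( 1-b/a\right) z$ to $1$, so the right side becomes $\left[ \psi \left( a\right) -\log \left( 1-z\right) +z/a\right] /\left( 1-z\right) ^{1+a}$, which is exactly the asserted corollary.

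The computation is routine; the only step that genuinely needs an argument is the legitimacy of differentiating the hypergeometric series term by term in the parameter $b$. This follows because, for $\left\vert z\right\vert \le r<1$, the differentiated series is dominated by a convergent series independent of the parameters (using $\left( a+1\right) _{k}/\left( a\right) _{k}=1+k/a$ and the bound $\psi \left( b+k\right) =O(\log k)$), so the Weierstrass $M$-test applies; equivalently, one appeals to Cauchy's estimates for the holomorphic dependence of the series on $b$.
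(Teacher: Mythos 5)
Your proposal is correct and follows essentially the same route as the paper: the corollary is just the specialization $b=a$ of the preceding theorem, whose proof is exactly the differentiation of the reduction formula (\ref{2F1(a+1,b;a;z)}) with respect to $b$ via (\ref{D[(x)_n]}) that you carried out. Your added justification of term-by-term differentiation in the parameter (using $(a+1)_k/(a)_k=1+k/a$ and $\psi(b+k)=O(\log k)$) is a welcome detail the paper leaves implicit, but it does not change the argument.
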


\begin{corollary}
For the particular case $b=1$, we obtain%
\begin{eqnarray*}
&&\sum_{k=0}^{\infty }z^{k}\left( a+k\right) \psi \left( k+1\right) \\
&=&\frac{z-\left[ \gamma +\log \left( 1-z\right) \right] \left[ a+\left(
1-a\right) z\right] }{\left( 1-z\right) ^{2}}.
\end{eqnarray*}
\end{corollary}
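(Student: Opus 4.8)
The plan is to differentiate the reduction formula (\ref{2F1(a+1,b;a;z)}) with respect to the parameter $b$, exactly as the author announces, and then solve for the series of interest. First I would expand the left-hand side as its hypergeometric series
\[
{}_{2}F_{1}\left( \left.
\begin{array}{c}
a+1,b \\
a
\end{array}
\right\vert z\right)
=\sum_{k=0}^{\infty }\frac{\left( a+1\right) _{k}\left( b\right) _{k}}{k!\,\left( a\right) _{k}}z^{k},
\]
which, for fixed $a\notin \left\{ 0,-1,-2,\dots \right\} $, converges absolutely and uniformly on compact subsets of $\left\vert z\right\vert <1$, so term-by-term differentiation in $b$ is legitimate. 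Applying (\ref{D[(x)_n]}) to the factor $\left( b\right) _{k}$ gives $\partial _{b}\left( b\right) _{k}=\left( b\right) _{k}\left[ \psi \left( b+k\right) -\psi \left( b\right) \right] $, whence
\[
\frac{\partial }{\partial b}\,{}_{2}F_{1}\left( \left.
\begin{array}{c}
a+1,b \\
a
\end{array}
\right\vert z\right)
=\sum_{k=0}^{\infty }\frac{\left( a+1\right) _{k}\left( b\right) _{k}}{k!\,\left( a\right) _{k}}z^{k}\psi \left( b+k\right)
-\psi \left( b\right) \,{}_{2}F_{1}\left( \left.
\begin{array}{c}
a+1,b \\
a
\end{array}
\right\vert z\right) .
\]
Solving for the weighted sum, the series in the statement equals $\partial _{b}\!\left[ {}_{2}F_{1}\right] +\psi \left( b\right) \,{}_{2}F_{1}$, and both terms on the right are now read off from the closed form in (\ref{2F1(a+1,b;a;z)}).

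It then remains only to differentiate $F\left( b\right) :=\left[ 1-\left( 1-\frac{b}{a}\right) z\right] \left( 1-z\right) ^{-1-b}$ with respect to $b$. Writing $F\left( b\right) =u\left( b\right) v\left( b\right) $ with $u\left( b\right) =1-z+\frac{b}{a}z$ and $v\left( b\right) =\left( 1-z\right) ^{-1-b}$, one has $u^{\prime }\left( b\right) =z/a$ and $v^{\prime }\left( b\right) =-\log \left( 1-z\right) \left( 1-z\right) ^{-1-b}$, so the product rule gives
\[
F^{\prime }\left( b\right) =\frac{z}{a}\left( 1-z\right) ^{-1-b}-\log \left( 1-z\right) \left[ 1-\left( 1-\frac{b}{a}\right) z\right] \left( 1-z\right) ^{-1-b}.
\]
Adding $\psi \left( b\right) F\left( b\right) $ and factoring out $\left( 1-z\right) ^{-1-b}$ yields
\[
F^{\prime }\left( b\right) +\psi \left( b\right) F\left( b\right) =\left( 1-z\right) ^{-1-b}\left\{ \frac{z}{a}+\left[ 1-\left( 1-\frac{b}{a}\right) z\right] \left[ \psi \left( b\right) -\log \left( 1-z\right) \right] \right\} ,
\]
which is precisely the right-hand side of the asserted identity. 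Equating this with $\partial _{b}\!\left[ {}_{2}F_{1}\right] +\psi \left( b\right) \,{}_{2}F_{1}$ completes the proof.

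I do not expect a genuine obstacle here; the only point needing a word of justification is the interchange of differentiation and summation, which follows from the local uniform convergence on $\left\vert z\right\vert <1$ of the $_{2}F_{1}$ series and of its $b$-derivative (the ratio of consecutive terms of the latter still tends to $z$, since $\psi \left( b+k+1\right) /\psi \left( b+k\right) \rightarrow 1$). The stated corollaries then fall out by direct substitution: for $b=a$ one uses $\left( a+1\right) _{k}/\left( a\right) _{k}$ together with $\psi \left( a\right) $, and for $b=1$ one uses $\left( 1\right) _{k}=k!$, the identity $\left( a+1\right) _{k}/\left( a\right) _{k}=\left( a+k\right) /a$, and $\psi \left( 1\right) =-\gamma $ from (\ref{psi(1)}).
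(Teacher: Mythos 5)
Your proposal is correct and follows essentially the same route as the paper: differentiate the reduction formula (\ref{2F1(a+1,b;a;z)}) with respect to $b$ using (\ref{D[(x)_n]}) to obtain the general theorem, then set $b=1$ with $(1)_k=k!$, $(a+1)_k/(a)_k=(a+k)/a$ and $\psi(1)=-\gamma$ (and multiply through by $a$) to reach the stated identity. The differentiation of the closed form and the handling of the $\psi(b)$ term are carried out correctly, so no gap remains.
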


\begin{theorem}
The following series holds true:\
\begin{eqnarray}
&&\sum_{k=0}^{\infty }\frac{\left( k+1\right) !}{\left( b\right) _{k}}%
2^{-k}\psi \left( k+b\right)  \label{F_1/2b} \\
&=&2\left[ \left( b-1\right) \psi \left( b\right) -1\right] +4\left[
2b-3-\left( b-1\right) \left( b-2\right) \psi \left( b\right) \right] \beta
\left( b-1\right)  \nonumber \\
&&+4\left( b-1\right) \left( b-2\right) \beta ^{\prime }\left( b-1\right) .
\nonumber
\end{eqnarray}
\end{theorem}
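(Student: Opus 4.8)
The plan is to recognise the digamma-free part of the summand as a Gauss function evaluated at $1/2$ and then to differentiate its closed form with respect to $b$. Since $\left( k+1\right) !=\left( 2\right) _{k}$ and $k!=\left( 1\right) _{k}$, put
\begin{equation*}
S\left( b\right) :=\sum_{k=0}^{\infty }\frac{\left( k+1\right) !}{\left( b\right) _{k}}\,2^{-k}=\sum_{k=0}^{\infty }\frac{\left( 1\right) _{k}\left( 2\right) _{k}}{\left( b\right) _{k}}\frac{\left( 1/2\right) ^{k}}{k!}=\,_{2}F_{1}\left( \left. \begin{array}{c} 1,2 \\ b \end{array} \right\vert \frac{1}{2}\right) ,
\end{equation*}
which converges for every $b$ that is not a non-positive integer. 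Using $1/\left( b\right) _{k}=\left( b-1\right) \mathrm{B}\left( k+1,b-1\right) /k!$ from (\ref{Beta_property}), writing $\mathrm{B}\left( k+1,b-1\right) =\int_{0}^{1}t^{k}\left( 1-t\right) ^{b-2}dt$, interchanging summation and integration, and summing $\sum_{k\geq 0}\left( k+1\right) x^{k}=\left( 1-x\right) ^{-2}$, one gets $S\left( b\right) =\left( b-1\right) \int_{0}^{1}\left( 1-t\right) ^{b-2}\left( 1-t/2\right) ^{-2}dt$; the substitution $t\mapsto 1-t$ converts this to $4\left( b-1\right) \int_{0}^{1}u^{b-2}\left( 1+u\right) ^{-2}du$, and a single integration by parts, combined with the representation $\beta \left( z\right) =\int_{0}^{1}t^{z-1}\left( 1+t\right) ^{-1}dt$ and the recurrence $\beta \left( z\right) +\beta \left( z+1\right) =1/z$ (the latter immediate from (\ref{beta_def}) and (\ref{psi(1+z)})), yields --- first for $\mathrm{Re}\,b>2$ and then by analytic continuation --- the reduction formula
\begin{equation*}
S\left( b\right) =2\left( b-1\right) -4\left( b-1\right) \left( b-2\right) \beta \left( b-1\right)
\end{equation*}
(which may also be quoted directly from \cite{Prudnikov3}).

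The second step is to differentiate $S\left( b\right) $ term by term with respect to $b$; this is legitimate since the series converges locally uniformly away from the poles of the $\left( b\right) _{k}$. Applying (\ref{D[1/(x)_n]}) to each summand,
\begin{equation*}
S^{\prime }\left( b\right) =\sum_{k=0}^{\infty }\frac{\left( k+1\right) !}{\left( b\right) _{k}}\,2^{-k}\left[ \psi \left( b\right) -\psi \left( b+k\right) \right] =\psi \left( b\right) \,S\left( b\right) -\sum_{k=0}^{\infty }\frac{\left( k+1\right) !}{\left( b\right) _{k}}\,2^{-k}\,\psi \left( b+k\right) ,
\end{equation*}
so the series on the left-hand side of (\ref{F_1/2b}) equals $\psi \left( b\right) \,S\left( b\right) -S^{\prime }\left( b\right) $.

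Finally, differentiating the closed form of $S\left( b\right) $ by the product rule gives
\begin{equation*}
S^{\prime }\left( b\right) =2-4\left( 2b-3\right) \beta \left( b-1\right) -4\left( b-1\right) \left( b-2\right) \beta ^{\prime }\left( b-1\right) ,
\end{equation*}
and inserting this together with the closed form of $S\left( b\right) $ into $\psi \left( b\right) \,S\left( b\right) -S^{\prime }\left( b\right) $, then collecting the coefficients of $1$, of $\beta \left( b-1\right) $ and of $\beta ^{\prime }\left( b-1\right) $, reproduces the right-hand side of (\ref{F_1/2b}). I expect the only genuinely non-routine point to be the derivation (or location) of the reduction formula for $_{2}F_{1}\left( 1,2;b;1/2\right) $ expressed through $\beta \left( b-1\right) $; once that is available the remaining manipulations are purely algebraic, and the final identity can be confirmed numerically.
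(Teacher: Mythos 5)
Your proof is correct and follows essentially the same route as the paper: identify the sum without the digamma factor as $_{2}F_{1}\left(1,2;b;\tfrac{1}{2}\right)$, invoke its closed form $2\left(b-1\right)\left[1-2\left(b-2\right)\beta\left(b-1\right)\right]$ (which the paper simply quotes from Prudnikov et al., Eqn.\ 7.3.7(18)), and differentiate with respect to $b$ using (\ref{D[1/(x)_n]}). The only difference is that you also supply a self-contained derivation of that reduction formula via the beta-integral representation, which the paper omits; your derivation and the final algebra both check out.
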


\begin{proof}
Differentiate the reduction formula \cite[Eqn. 7.3.7(18)]{Prudnikov3}%
\begin{equation*}
_{2}F_{1}\left( \left.
\begin{array}{c}
1,2 \\
b%
\end{array}%
\right\vert \frac{1}{2}\right) =\sum_{k=0}^{\infty }\frac{\left( k+1\right) !%
}{\left( b\right) _{k}}2^{-k}=2\left( b-1\right) \left[ 1-2\left( b-2\right)
\beta \left( b-1\right) \right] ,
\end{equation*}%
with respect to parameter $b$.
\end{proof}

\begin{remark}
If we differentiate the reduction formula \cite[Eqn. 7.3.7(17)]{Prudnikov3}:%
\begin{equation*}
_{2}F_{1}\left( \left.
\begin{array}{c}
1,1 \\
b%
\end{array}%
\right\vert \frac{1}{2}\right) =\sum_{k=0}^{\infty }\frac{k!}{\left(
b\right) _{k}}2^{-k}=2\left( b-1\right) \beta \left( b-1\right) ,
\end{equation*}%
with respect to parameter $b$, we will obtain \cite[Eqn. 6.2.1(64)]{Brychov}%
\begin{eqnarray}
&&\sum_{k=0}^{\infty }\frac{2^{-k}k!}{\left( b\right) _{k}}\psi \left(
k+b\right)  \label{F_1/2} \\
&=&2\left[ \left( b-1\right) \psi \left( b\right) -1\right] \beta \left(
b-1\right) -2\left( b-1\right) \beta ^{\prime }\left( b-1\right) .  \nonumber
\end{eqnarray}
\end{remark}

\begin{corollary}
Substracting (\ref{F_1/2})\ from (\ref{F_1/2b}), we arrive at%
\begin{eqnarray*}
&&\sum_{k=0}^{\infty }\frac{k\,k!}{\left( b\right) _{k}}2^{-k-1}\psi \left(
k+b\right) \\
&=&\left( b-1\right) \left[ \psi \left( b\right) +\left( 2b-3\right) \beta
^{\prime }\left( b-1\right) \right] \\
&&+\left[ 4b-5-\left( b-1\right) \left( 2b-3\right) \psi \left( b\right) %
\right] \beta \left( b-1\right) -1.
\end{eqnarray*}
\end{corollary}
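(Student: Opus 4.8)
The plan is to obtain the stated identity purely by subtracting the two series already displayed in the excerpt, namely the series (\ref{F_1/2b}) for $\sum_{k=0}^{\infty}\frac{(k+1)!}{(b)_k}2^{-k}\psi(k+b)$ and the series (\ref{F_1/2}) for $\sum_{k=0}^{\infty}\frac{2^{-k}k!}{(b)_k}\psi(k+b)$. The key observation is the elementary factorization of the summand coefficients: since $(k+1)!=(k+1)\,k!$, we have
\begin{equation*}
\frac{(k+1)!}{(b)_k}2^{-k}-\frac{k!}{(b)_k}2^{-k}=\frac{k\,k!}{(b)_k}2^{-k},
\end{equation*}
so term-by-term the difference of the two left-hand sides is exactly $\sum_{k=0}^{\infty}\frac{k\,k!}{(b)_k}2^{-k}\psi(k+b)$, and the factor $2^{-k}$ can be written as $2\cdot 2^{-k-1}$ to match the $2^{-k-1}$ appearing in the corollary's statement. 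Thus the corollary is $\tfrac12$ times the difference (\ref{F_1/2b}) $-$ (\ref{F_1/2}), and no new analytic input (no hypergeometric reduction, no differentiation) is needed beyond what the theorem and remark already provide.

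What remains is the algebraic bookkeeping on the right-hand sides. I would write $\psi$ for $\psi(b)$, $\beta$ for $\beta(b-1)$ and $\beta'$ for $\beta'(b-1)$ to keep the expressions compact, then subtract the RHS of (\ref{F_1/2}) from the RHS of (\ref{F_1/2b}) and collect by the three independent quantities $1$, $\psi$, $\beta$, $\beta'$ (and the products $\psi\beta$). Grouping the $\psi$-free constant terms gives $2(2b-3)\beta-2$ from the first line of (\ref{F_1/2b}) combined with the $-2\beta$ hidden in (\ref{F_1/2}); grouping the $\beta'$ terms gives $4(b-1)(b-2)\beta'+2(b-1)\beta'$; grouping the $\psi$ and $\psi\beta$ terms gives $2(b-1)\psi-4(b-1)(b-2)\psi\beta-2(b-1)\psi\beta$. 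After dividing the whole thing by $2$ one should land precisely on
\begin{equation*}
(b-1)\big[\psi+(2b-3)\beta'\big]+\big[4b-5-(b-1)(2b-3)\psi\big]\beta-1,
\end{equation*}
which is the claimed closed form.

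The only mild obstacle is purely clerical: making sure the sign of the $\beta'$ term in (\ref{F_1/2}) is carried correctly through the subtraction (it enters with a $-$ in (\ref{F_1/2}), hence contributes $+2(b-1)\beta'$ to the difference), and checking that $4(b-1)(b-2)+2(b-1)=2(b-1)(2b-3)$ so that after the division by $2$ the coefficient of $\beta'$ is exactly $(b-1)(2b-3)$; likewise that $-4(b-1)(b-2)-2(b-1)=-2(b-1)(2b-3)$ gives the coefficient $-(b-1)(2b-3)$ of $\psi\beta$. There is nothing deep here, so I would simply perform the subtraction, collect terms, divide by $2$, and as a final safeguard numerically verify the resulting identity (e.g. in MATHEMATICA) at a couple of values of $b$, as the author does for every other result in the paper.
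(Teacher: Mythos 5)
Your proposal is correct and follows exactly the paper's (implicit) argument: the corollary is precisely $\tfrac12\bigl[(\ref{F_1/2b})-(\ref{F_1/2})\bigr]$, using $(k+1)!-k!=k\,k!$ on the left-hand sides and collecting terms on the right. One clerical slip in your narration: the $\psi$-free $\beta$-terms combine to $4(2b-3)\beta(b-1)+2\beta(b-1)=2(4b-5)\beta(b-1)$, not $2(2b-3)\beta(b-1)$, but your final coefficients (and the key checks $4(b-1)(b-2)+2(b-1)=2(b-1)(2b-3)$) are all correct.
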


\begin{theorem}
For $\left\vert z\right\vert <1$, the following infinite sum holds true:%
\begin{eqnarray}
&&\sum_{k=0}^{\infty }\frac{z^{k}\left( a+1\right) _{k}\left( b\right) _{k}}{%
\left( a\right) _{k}\left( c\right) _{k}}\,\psi \left( c+k\right)
\label{Sum_Sofostasios} \\
&=&\psi \left( c-1\right) \,_{3}F_{2}\left( \left.
\begin{array}{c}
1,a+1,b \\
a,c%
\end{array}%
\right\vert z\right)  \nonumber \\
&&+\frac{1}{a\left( 1-z\right) ^{1+b}}\left\{ \frac{\,a+\left( b-a\right) z}{%
c-1}\,\,_{3}F_{2}\left( \left.
\begin{array}{c}
b,c-1,c-1 \\
c,c%
\end{array}%
\right\vert \frac{z}{z-1}\right) \right.  \nonumber \\
&&+\left. \frac{b\left( c-1\right) z}{c^{2}\left( z-1\right) }%
\,\,_{3}F_{2}\left( \left.
\begin{array}{c}
b+1,c,c \\
c+1,c+1%
\end{array}%
\right\vert \frac{z}{z-1}\right) \right\} .  \nonumber
\end{eqnarray}
\end{theorem}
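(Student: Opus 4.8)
The plan is to obtain (\ref{Sum_Sofostasios}) by differentiating, with respect to $c$, the $_3F_2$ whose $\psi$-weighted series is the left-hand side; this is the ``different approach'' announced in the introduction, the whole point being to express that parameter derivative in closed form. Since $(1)_k=k!$, put $\Phi(c):={}_3F_2(1,a+1,b;a,c\,|\,z)=\sum_{k\ge0}\frac{(a+1)_k(b)_k}{(a)_k(c)_k}z^k$. Term-by-term differentiation together with (\ref{D[1/(x)_n]}) gives $\partial_c\Phi(c)=\psi(c)\,\Phi(c)-\sum_{k\ge0}\frac{(a+1)_k(b)_k}{(a)_k(c)_k}z^k\,\psi(c+k)$, so the left-hand side of (\ref{Sum_Sofostasios}) equals $\psi(c)\,\Phi(c)-\partial_c\Phi(c)$. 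Inserting $\psi(c)=\psi(c-1)+\tfrac1{c-1}$ from (\ref{psi(1+z)}) and noting $\Phi(c)={}_3F_2(1,a+1,b;a,c\,|\,z)$, the theorem is then equivalent to the closed-form evaluation
\begin{equation*}
\frac{\Phi(c)}{c-1}-\partial_c\Phi(c)=\frac{1}{a(1-z)^{1+b}}\left[\frac{a+(b-a)z}{c-1}\,{}_3F_2\!\left(b,c-1,c-1;c,c\,\Big|\,\tfrac{z}{z-1}\right)+\frac{b(c-1)z}{c^2(z-1)}\,{}_3F_2\!\left(b+1,c,c;c+1,c+1\,\Big|\,\tfrac{z}{z-1}\right)\right].
\end{equation*}

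To produce the right-hand side I would pass to an integral representation. Using (\ref{Beta_property}) in the form $\frac{(1)_k}{(c)_k}=(c-1)\int_0^1 t^k(1-t)^{c-2}\,dt$ (valid for $\mathrm{Re}\,c>1$) and summing, $\Phi(c)=(c-1)\int_0^1(1-t)^{c-2}\,{}_2F_1(a+1,b;a;zt)\,dt=:(c-1)I(c)$, whence $\frac{\Phi(c)}{c-1}-\partial_c\Phi(c)=I(c)-I(c)-(c-1)I'(c)=-(c-1)I'(c)$ with $I'(c)=\int_0^1(1-t)^{c-2}\log(1-t)\,{}_2F_1(a+1,b;a;zt)\,dt$. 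Now substitute the reduction formula (\ref{2F1(a+1,b;a;z)}), ${}_2F_1(a+1,b;a;zt)=[1-(1-\tfrac ba)zt](1-zt)^{-1-b}$, change variable $t\mapsto1-s$, and extract $(1-z)^{-1-b}$ from $1-z(1-s)=(1-z)\bigl(1-\tfrac{z}{z-1}s\bigr)$; with $x:=\tfrac{z}{z-1}$, $A:=1-(1-\tfrac ba)z$, $B:=(1-\tfrac ba)z$ this gives $-(c-1)I'(c)=-(c-1)(1-z)^{-1-b}\int_0^1 s^{c-2}\log s\,(1-xs)^{-1-b}(A+Bs)\,ds$. The $\log s$ is handled by writing $\int_0^1 s^{\sigma-1}(1-xs)^{-1-b}\,ds=\sigma^{-1}\,{}_2F_1(b+1,\sigma;\sigma+1;x)$ and differentiating in $\sigma$; the two resulting derivatives in $c$ are elementary (one uses $\frac{d}{dc}\frac{c-1}{c-1+k}=\frac{k}{(c-1+k)^2}$, $\frac{d}{dc}\frac{c}{c+k}=-\frac{k}{(c+k)^2}$) and yield $-(c-1)I'(c)=(1-z)^{-1-b}\bigl[\frac{A}{c-1}\,{}_3F_2(b+1,c-1,c-1;c,c\,|\,x)+\frac{B(c-1)}{c^2}\,{}_3F_2(b+1,c,c;c+1,c+1\,|\,x)\bigr]$.

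It remains to bring this to the stated shape, which needs two more reductions. First, the contiguous relation ${}_3F_2(b+1,c-1,c-1;c,c\,|\,x)={}_3F_2(b,c-1,c-1;c,c\,|\,x)+\frac{(c-1)^2}{c^2}x\,{}_3F_2(b+1,c,c;c+1,c+1\,|\,x)$, immediate from $(b+1)_k=\frac{b+k}{b}(b)_k$ and the derivative rule $\frac{d}{dx}{}_3F_2(b,c-1,c-1;c,c\,|\,x)=\frac{b(c-1)^2}{c^2}{}_3F_2(b+1,c,c;c+1,c+1\,|\,x)$. Second, the algebraic cancellation $Ax+B=\frac{bz}{a(z-1)}$ together with $A=\frac{a+(b-a)z}{a}$. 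Feeding the contiguous relation into the last display collects the coefficient of ${}_3F_2(b+1,c,c;c+1,c+1\,|\,x)$ into $\frac{(c-1)(Ax+B)}{c^2}=\frac{b(c-1)z}{ac^2(z-1)}$ and leaves $\frac{A}{c-1}=\frac{a+(b-a)z}{a(c-1)}$ multiplying ${}_3F_2(b,c-1,c-1;c,c\,|\,x)$; factoring out $1/a$ reproduces exactly the bracket displayed above, and a final analytic continuation in $c$ removes the restriction $\mathrm{Re}\,c>1$. This integral route is precisely one reading of the comparison advertised in the introduction: the parameter-derivative formula of \cite{Fejzullahu} writes $\partial_c\Phi$ as the bare $\psi$-series on the left, while that of \cite{Sofostasios} packages the same derivative into hypergeometric functions of argument $z/(z-1)$ on the right, and equating the two is the identity.

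I expect the main obstacle to be the step in which $c$ is differentiated while occupying both an upper and a lower slot of the auxiliary $_2F_1$'s — exactly the place where the two differentiation formulas must be reconciled — followed by the bookkeeping of prefactors: one has to spot the contiguous relation and the cancellation $Ax+B=\frac{bz}{a(z-1)}$ so that the $(b+1)$-type $_3F_2$'s and the various rational factors telescope down to the clean two-term form claimed in (\ref{Sum_Sofostasios}) rather than to a bulkier but equivalent combination.
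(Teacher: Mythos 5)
Your proposal is correct, but it follows a genuinely different route from the paper. The paper proves (\ref{Sum_Sofostasios}) by equating the two literature formulas for the derivative of a generalized hypergeometric function with respect to a lower parameter quoted in (\ref{Diff_hyper_formulas}) (Fejzullahu versus Sofotasios--Brychkov) at the particular choice $m=1$, $(a_p)=(\alpha+1,\beta,1)$, $(b_q)=(\alpha)$; the needed ingredients there are a Prudnikov reduction formula for $_3F_2(-k,a,b;a+1,b+1\,|\,1)$, the limit $a\to b$ via (\ref{D[(x)_n]}) to produce the digamma terms, and the shifted form (\ref{2F1_resultado_parcial}) of (\ref{2F1(a+1,b;a;z)}). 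You instead compute the same parameter derivative directly and self-containedly: writing the left-hand side as $\psi(c)\Phi(c)-\partial_c\Phi(c)$ via (\ref{D[1/(x)_n]}), representing $\Phi(c)$ through the Euler-type integral $(1)_k/(c)_k=(c-1)\int_0^1 t^k(1-t)^{c-2}dt$, inserting the closed form (\ref{2F1(a+1,b;a;z)}), and finishing with a contiguous relation and the cancellation $Ax+B=\frac{bz}{a(z-1)}$ — all of which I checked and which do reproduce the stated right-hand side. What your approach buys is independence from the two external differentiation formulas (the paper's route exists partly to showcase their comparison), an arguably more elementary calculation, and a built-in analytic continuation of the $_3F_2$'s in the argument $z/(z-1)$, whose modulus exceeds $1$ when $\mathrm{Re}\,z>1/2$, since your integrals converge for all $x=z/(z-1)\notin[1,\infty)$; what it needs, and what you only gesture at, are the routine justifications of term-by-term differentiation in $c$, differentiation under the integral sign, the temporary restriction $\mathrm{Re}\,c>1$ removed at the end by analytic continuation, and the generic conditions $a\neq 0$, $c\neq 1,0,-1,\dots$ already implicit in the statement. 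Your closing speculation that the proof hinges on reconciling the two differentiation formulas describes the paper's method rather than yours, but this does not affect the validity of your argument.
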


\begin{proof}
On the one hand, consider the reduction formula \cite[Eqn. 7.4.4(94)]%
{Prudnikov3}%
\begin{eqnarray*}
&&_{3}F_{2}\left( \left.
\begin{array}{c}
-k,a,b \\
a+\ell ,b+n%
\end{array}%
\right\vert 1\right) \\
&=&k!\left( a\right) _{\ell }\left( b\right) _{n}\left[ \frac{1}{\left( \ell
-1\right) !\left( a\right) _{k+1}\left( b-a\right) _{n}}\,_{3}F_{2}\left(
\left.
\begin{array}{c}
1-\ell ,a,1+a-b-n \\
1+a+k,1+a-b%
\end{array}%
\right\vert 1\right) \right. \\
&&+\left. \frac{1}{\left( n-1\right) !\left( b\right) _{k+1}\left(
a-b\right) _{\ell }}\,_{3}F_{2}\left( \left.
\begin{array}{c}
1-n,b,1+b-a-\ell \\
1+b+k,1+b-a%
\end{array}%
\right\vert 1\right) \right] ,
\end{eqnarray*}%
for the particular case $\ell =1$, $n=1$, to obtain%
\begin{equation*}
_{3}F_{2}\left( \left.
\begin{array}{c}
-k,a,b \\
a+1,b+1%
\end{array}%
\right\vert 1\right) =\frac{k!a\,b}{\left( a\right) _{k+1}\left( b\right)
_{k+1}}\left[ \frac{\left( a\right) _{k+1}-\left( b\right) _{k+1}}{a-b}%
\right] .
\end{equation*}

Take the limit $a\rightarrow b$ and apply (\ref{D[(x)_n]}) as well as the
property (\ref{property_Pochhammer}), to obtain
\begin{eqnarray}
_{3}F_{2}\left( \left.
\begin{array}{c}
-k,b,b \\
b+1,b+1%
\end{array}%
\right\vert 1\right) &=&\frac{k!\,b^{2}}{\left[ \left( b\right) _{k+1}\right]
^{2}}\lim_{a\rightarrow b}\left[ \frac{\left( a\right) _{k+1}-\left(
b\right) _{k+1}}{a-b}\right]  \nonumber \\
&=&\frac{k!\,b^{2}}{\left[ \left( b\right) _{k+1}\right] ^{2}}\frac{d}{dx}%
\left[ \left( x\right) _{k+1}\right] _{x=b}  \nonumber \\
&=&\frac{k!\,b}{\left( b+1\right) _{k}}\left[ \psi \left( b+1+k\right) -\psi
\left( b\right) \right] .  \label{3F2_resultado_parcial}
\end{eqnarray}

On the other hand, from (\ref{2F1(a+1,b;a;z)}), we have%
\begin{equation}
_{2}F_{1}\left( \left.
\begin{array}{c}
\alpha +k+2,\beta +k+1 \\
\alpha +k+1%
\end{array}%
\right\vert z\right) =\left( 1+\frac{\beta -\alpha }{\alpha +k+1}z\right)
\left( 1-z\right) ^{-2-\beta -k}.  \label{2F1_resultado_parcial}
\end{equation}

Now, equate the results given in \cite{Sofostasios} and \cite{Fejzullahu},%
\begin{eqnarray}
&&D_{b}^{m}\left[ _{p}F_{q+1}\left( \left.
\begin{array}{c}
\left( a_{p}\right) \\
b,\left( b_{q}\right)%
\end{array}%
\right\vert z\right) \right]  \label{Diff_hyper_formulas} \\
&=&\frac{m!\left( -1\right) ^{m}\left( \left( a_{p}\right) \right) _{1}}{%
b^{m+1}\left( \left( b_{q}\right) \right) _{1}} \nonumber \\
&&\sum_{k=0}^{\infty }\frac{%
z^{k+1}\left( \left( a_{p}+1\right) \right) _{k}}{k!\left( k+1\right)
!\left( \left( b_{q}+1\right) \right) _{k}}\,_{m+2}F_{m+1}\left( \left.
\begin{array}{c}
-k,b,\ldots ,b \\
b+1,\ldots ,b+1%
\end{array}%
\right\vert 1\right)  \nonumber \\
&=&m!\left( -1\right) ^{m}z \nonumber \\
&&\sum_{k=0}^{\infty }\frac{\left( -z\right)
^{k}\left( \left( a_{p}\right) \right) _{k+1}}{k!\left( k+1\right) !\left(
\left( b_{q}\right) \right) _{k+1}\left( k+b\right) ^{m+1}}%
\,_{p}F_{q+1}\left( \left.
\begin{array}{c}
\left( a_{p}\right) +k+1 \\
\left( b_{q}\right) +k+1,k+2%
\end{array}%
\right\vert z\right) ,  \nonumber
\end{eqnarray}%
for the particular case $m=1$, $\left( a_{p}\right) =\left( \alpha +1,\beta
,1\right) $ and $\left( b_{q}\right) =\left( \alpha \right) $, to obtain%
\begin{eqnarray*}
&=&\frac{\left( \alpha +1\right) \beta }{b^{2}\alpha }\sum_{k=0}^{\infty }%
\frac{z^{k}\left( \alpha +2\right) _{k}\left( \beta +1\right) _{k}}{k!\left(
\alpha +1\right) _{k}}\,_{3}F_{2}\left( \left.
\begin{array}{c}
-k,b,b \\
b+1,b+1%
\end{array}%
\right\vert 1\right) \\
&=&\sum_{k=0}^{\infty }\frac{\left( -z\right) ^{k}\left( \alpha +1\right)
_{k+1}\left( \beta \right) _{k+1}}{k!\left( \alpha \right) _{k+1}\left(
k+b\right) ^{2}}\,_{2}F_{1}\left( \left.
\begin{array}{c}
\alpha +k+2,\beta +k+1 \\
\alpha +k+1%
\end{array}%
\right\vert z\right) .
\end{eqnarray*}

Next, insert (\ref{3F2_resultado_parcial})\ and (\ref{2F1_resultado_parcial}%
), and simplify the result using (\ref{property_Pochhammer}), to arrive at%
\begin{eqnarray*}
&&\sum_{k=0}^{\infty }\frac{z^{k}\left( \alpha +2\right) _{k}\left( \beta
+1\right) _{k}}{\left( \alpha +1\right) _{k}\left( b+1\right) _{k}}\,\left[
\psi \left( b+1+k\right) -\psi \left( b\right) \right] \\
&=&\frac{1\,}{b\left( \alpha +1\right) \left( 1-z\right) ^{2+\beta }} \nonumber \\%
&&\sum_{k=0}^{\infty }\frac{\left( \beta +1\right) _{k}\left[ \left( b\right)
_{k}\right] ^{2}}{k!\,\left[ \left( b+1\right) _{k}\right] ^{2}}\left[
\,\alpha +k+1+\left( \beta -\alpha \right) z\right] \left( \frac{z}{z-1}%
\right) ^{k}.
\end{eqnarray*}

Grouping terms,%
\begin{eqnarray*}
&&\sum_{k=0}^{\infty }\frac{z^{k}\left( \alpha +2\right) _{k}\left( \beta
+1\right) _{k}}{\left( \alpha +1\right) _{k}\left( b+1\right) _{k}}\,\psi
\left( b+1+k\right) \\
&=&\psi \left( b\right) \sum_{k=0}^{\infty }\frac{z^{k}\left( \alpha
+2\right) _{k}\left( \beta +1\right) _{k}\left( 1\right) _{k}}{k!\left(
\alpha +1\right) _{k}\left( b+1\right) _{k}} \\
&&+\frac{1\,}{b\left( \alpha +1\right) \left( 1-z\right) ^{2+\beta }}\nonumber \\
&&\left\{\,\left( \alpha +1+\left( \beta -\alpha \right) z\right) \sum_{k=0}^{\infty }%
\frac{\left( \beta +1\right) _{k}\left[ \left( b\right) _{k}\right] ^{2}}{%
k!\,\left[ \left( b+1\right) _{k}\right] ^{2}}\,\left( \frac{z}{z-1}\right)
^{k}\right. \\
&& +\left. \frac{\left( \beta +1\right) b^{2}}{\left(
b+1\right) ^{2}}\left( \frac{z}{z-1}\right) \sum_{k=0}^{\infty }\frac{\left(
\beta +2\right) _{k}\left[ \left( b+1\right) _{k}\right] ^{2}}{k!\,\left[
\left( b+2\right) _{k}\right] ^{2}}\,\left( \frac{z}{z-1}\right)
^{k}\right\} ,
\end{eqnarray*}%
and recasting the sums with hypergeometric functions (renaming the
parameters), we finally arrive at (\ref{Sum_Sofostasios}), as we wanted to
prove.
\end{proof}
\begin{remark}
For the particular case $a=b$, and taking into account the reduction formula
\cite[Eqn. 7.3.1(119)]{Prudnikov3}%
\begin{equation*}
_{2}F_{1}\left( \left.
\begin{array}{c}
1,a \\
c%
\end{array}%
\right\vert z\right) =z^{1-c}\left( 1-z\right) ^{c-a-1}\left( c-1\right)
\mathrm{B}_{z}\left( c-1,a-c+1\right) ,
\end{equation*}%
we obtain for $\left\vert z\right\vert <1$%
\begin{eqnarray}
&&\sum_{k=0}^{\infty }\frac{z^{k}\left( a\right) _{k}}{\left( c\right) _{k}}%
\,\psi \left( c+k\right)  \label{Particular_JL} \\
&=&\psi \left( c-1\right) \,z^{1-c}\left( 1-z\right) ^{c-a-1}\left(
c-1\right) \mathrm{B}_{z}\left( c-1,a-c+1\right)  \nonumber \\
&&+\frac{1}{\left( 1-z\right) ^{a}}\left\{ \frac{\,1}{c-1}%
\,\,_{3}F_{2}\left( \left.
\begin{array}{c}
a-1,c-1,c-1 \\
c,c%
\end{array}%
\right\vert \frac{z}{z-1}\right) \right.  \nonumber \\
&&+\left. \frac{\left( c-1\right) z}{c^{2}\left( z-1\right) }%
\,\,_{3}F_{2}\left( \left.
\begin{array}{c}
a,c,c \\
c+1,c+1%
\end{array}%
\right\vert \frac{z}{z-1}\right) \right\} ,  \nonumber
\end{eqnarray}%
which is a non-trivial alternative form of the result given in \cite%
{Cvijovic}:%
\begin{eqnarray}
&&\sum_{k=0}^{\infty }\frac{z^{k}\left( a\right) _{k}}{\left( c\right) _{k}}%
\,\left[ \psi \left( c+k\right) -\psi \left( c\right) \right]
\label{Particular_Cvijovic} \\
&=&\frac{a\,z}{c^{2}\left( 1-z\right) ^{\alpha +1}}\,_{3}F_{2}\left( \left.
\begin{array}{c}
a+1,c,c \\
c+1,c+1%
\end{array}%
\right\vert \frac{z}{z-1}\right) ,  \nonumber \\
a &\in &%
\mathbb{C}
,\left\vert z\right\vert <1.  \nonumber
\end{eqnarray}
\end{remark}

\section{Conclusions}\label{Section: Conclusions}

We have calculated some finite and infinite sums involving the digamma
function differentiating some reduction formulas of the hypergeometric
function with respect to the parameters and applying the differentiation
formulas of the Pochhammer symbol given in (\ref{D[(x)_n]})\ and (\ref%
{D[1/(x)_n]}). It is worth noting that this method can be applied to many
other reduction formulas of hypergeometric and generalized hypergeometric
functions. Here we have only selected some interesting new cases,
some of which have allowed us to detect errors in the literature. Also,
as a consistency test, we have recovered some formulas found in the literature
from some particular cases of the results obtained.

Nevertheless, in (\ref{Sum_Sofostasios}), we have applied other approach,
wherein we have compared the differentiation formulas given in (\ref%
{Diff_hyper_formulas})\ for a particular case of the parameters. This
approach is not as straightforward as the other one. However, note that the
particular case given in (\ref{Particular_JL})\ applying this method
provides a non-trivial alternative form of the result (\ref%
{Particular_Cvijovic})\ found in the literature.

Finally, we point out that all the sums presented in this paper have been
numerically checked with MATHEMATICA\ and they are available at %
\url{https://shorturl.at/CFG24}.

\section*{References}


\providecommand{\newblock}{}

\end{document}